\newtheorem{theorem}{Theorem}[section]
\newtheorem{lemma}[theorem]{Lemma}
\newtheorem{proposition}[theorem]{Proposition}
\newtheorem{corollary}[theorem]{Corollary}
\theoremstyle{definition}
\newtheorem{definition}[theorem]{Definition}
\newtheorem{example}[theorem]{Example}
\newtheorem{question}[theorem]{Question}
\theoremstyle{remark}
\newtheorem{remark}[theorem]{Remark}
\numberwithin{equation}{section}
\newcommand{\onto}{\xrightarrow{\textrm{onto}}}
\DeclareMathOperator{\dH}{\Delta}
\DeclareMathOperator{\id}{id}
\DeclareMathOperator{\retr}{\mathcal R}
\DeclareMathOperator{\diam}{diam}
\DeclareMathOperator{\dist}{dist}
\DeclareMathOperator{\sgn}{sign}
\DeclareMathOperator{\Lip}{Lip}
\title{Lipschitz clustering in metric spaces}
\author{Leonid V. Kovalev}
\address{215 Carnegie, Department of Mathematics, Syracuse University, Syracuse, NY 13244, USA}
\email{lvkovale@syr.edu}
\thanks{Supported by the National Science Foundation grant DMS-1764266.}
\subjclass[2020]{Primary 51F30; Secondary 30L10, 30L15, 54B20, 54C15} 
\keywords{Metric space, finite subset space, Lipschitz retraction, clustering, ultrametric}
\begin{document}
\baselineskip6mm
\maketitle

\begin{abstract} In this paper, the Lipschitz clustering property of a metric space refers to the existence of Lipschitz  retractions between its finite subset spaces. Obstructions to this property can be either topological or geometric features of the space. We prove that uniformly disconnected spaces have the Lipschitz clustering property, while for some connected spaces, the lack of sufficiently short connecting curves turns out to be an obstruction. This property is shown to be invariant under quasihomogeneous maps, but not under quasisymmetric ones.   
\end{abstract}

\section{Introduction} 

A metric space $X$ can be viewed as the first member of an infinite sequence of nested metric spaces $(X(n), \Delta)$, $n=1, 2, \dots$ where the elements of $X(n)$ are subsets of $X$ with at most $n$ elements, and $\Delta$ is the Hausdorff metric. The topology and geometry of $X(n)$ can be difficult to grasp  even for simple spaces $X$, e.g.,~\cite{AMS,BorovikovaIbragimov,BorsukUlam,Tuffley}. This paper will show how the relations between the finite subset spaces $X(n)$ reflect the cluster tendency of $X$. 

The detection of clusters in a finite subset of a metric space is an important part of statistical data analysis, and its solution is usually sought in the form of an algorithm. Our approach is different in that we treat the problem as a purely mathematical one, focusing on the existence of a Lipschitz continuous map $\retr\colon X(n)\to X(k)$, $k<n$, such that $\retr$ acts as the identity on $X(k)$. If such $\retr$ exists, the elements of $\retr(A)$ can be interpreted as centers of (at most $k$) clusters of a set $A\in X(n)$. Each point  $x\in A$ can be assigned to a cluster based on which point of $\retr(A)$ is nearest to $x$. 
Among all metric spaces, those that satisfy the ultrametric inequality~\eqref{eq:ultrametric-def} are exceptionally well suited for clustering~\cite{FOUCHAL2013219,MurtaghDownsContreras}. The first of our main results, Theorem~\ref{thm:ultra}, shows that the same holds when cluster tendency of $X$ is quantified by the Lipschitz constants of retractions $\retr\colon X(n)\to X(k)$.   

Section~\ref{sec:linear-sets} concerns the existence of Lipschitz or H\"older continuous retractions for subsets of $\mathbb R$. It simplifies and extends some of the results in~\cite{Kovalev2015}. 
Every  additive subgroup  $G\subset \mathbb R$ supports Lipschitz retractions of $G(n)$ (Corollary~\ref{cor:additive-subgroup}). On the other hand, there exist subsets of $\mathbb R$ with H\"older continuous retractions only (Proposition~\ref{prop:delete-min}). In Corollary~\ref{cor:nofactor} we will see that a Lipschitz retraction $X(n)\to X(k)$ does not always factor through retractions between intermediate finite subset spaces. 

A metric space $X$ that supports Lipschitz retractions $X(n)\to X(k)$ for all $n>k\ge 1$ is said to have the Lipschitz Clustering Property (LCP).
Section~\ref{sec:quasiconvexity} relates the LCP to a better understood class of metric spaces: quasiconvex ones, where any two points $x, y$ can be joined by a curve of  length comparable to the distance between $x$ and $y$. Our second main result, Theorem~\ref{thm:quasiconvex}, shows that an LCP space that contains a bi-Lipschitz image of a line segment must be locally quasiconvex. On the other hand, it is possible for a metric space to have the LCP without containing any rectifiable curves (Example~\ref{ex:snowflake}).   

Section~\ref{sec:invariance} concerns the invariance of the LCP under certain transformations of metric spaces: quasisymmetric and quasihomogeneous maps, products, disjoint unions, etc. The paper concludes with a list of open questions in Section~\ref{sec:questions}. 
 
\section{Definitions and preliminary results}~\label{sec:prelim}

When $A$ is a finite set, $|A|$ denotes its number of elements. Given a metric space $X$ and a positive integer $n$, let $X(n) = \{A\subset X\colon 1\le |A|\le n\}$. The space $X(n)$ is called the $n$th finite subset space of $X$ (other names, such as ``symmetric product'',  appear in the literature). It is a metric space with respect to the \emph{Hausdorff metric}
\begin{equation}\label{def-dh}
\dH(A, B) = \max\left(\sup_{a\in A} \dist(a, B), \sup_{b\in B} \dist(b, A)\right).
\end{equation}
Sometimes we write $\dH_{X}$ instead of $\dH$  to disambiguate the underlying metric space. The notation $\dist $ means the infimal distance $\dist(A, B)=\inf\{d(a, b)\colon a\in A, b\in B\}$.

The natural embeddings $X=X(1)\subset X(2)\subset\cdots$ are isometric with respect to $\dH$, which allows us to consider $X(k)$ as a subset of $X(n)$ when $k < n$.  

If $Y\subset X$, a map $\retr\colon X\to Y$ is a \emph{retraction} if its restriction to $Y$ is the identity map. A set $Y$ for which such $\retr$ exists is a \emph{retract} of $X$. In the context of geometric embeddings of metric spaces, it is desirable for the range of an embedding to be a Lipschitz retract of the ambient space~\cite{LangPlaut}.

A map $f\colon X\to Y$ is called \emph{Lipschitz} if there is a number $L\geq 0$ such that $d_Y(f(x),f(x'))\leq L d_X(x,x')$ for all $x,x'\in X$. We sometimes emphasize the value of $L$ by saying that $f$ is $L$-Lipschitz. The least of such numbers $L$ is denoted by $\Lip(f)$ and is called the \emph{Lipschitz constant} of $f$. A map $f$ is called  $L$-bi-Lipschitz if 
$L^{-1}d_X(x,x') \le d_Y(f(x),f(x'))\le L d_X(x,x') $ for all $x,x'\in X$.

\begin{definition}\label{def:LCP}
A metric space $X$ has the  \emph{Lipschitz Clustering Property (LCP)} if for every $n$ there exists a Lipschitz retraction $\retr_n$ from $X(n)$ onto $X(n-1)$. 
\end{definition}

Definition~\ref{def:LCP} implies, via composition of maps $\retr_n$, the existence of Lipschitz retractions $X(n)\onto X(k)$ for any $n> k\ge 1$.  
The class of LCP spaces includes Euclidean and Hilbert spaces~\cite{Kovalev2016} and, more generally, Hadamard spaces~\cite{BacakKovalev}. It does not include the circle $S^1$~\cite[Proposition 2.2]{AkoforKovalev} or any space that retracts onto a circle.

\begin{definition}\label{def:quasiconvex}
A metric space $(X, d)$ is \emph{quasiconvex} if there exists a constant $C$ such that any two points $x, y\in X$ can be joined by a curve of length at most $C\,d(x, y)$.  
\end{definition}

Although Definitions~\ref{def:LCP} and  \ref{def:quasiconvex} do not look  similar, they have something in common: both properties are inherited by Lipschitz retracts of the space. A stronger connection between them will be established in~\S\ref{sec:quasiconvexity}.

\begin{definition}\label{def:delta-n}
The \emph{minimum separation function} $\delta_n\colon X(n)\to [0, \infty)$ is defined as follows:
$\delta_n(A)=0$ if $|A|<n$, and 
$\delta_n(A) = \min \{d_X(a, b) \colon a, b\in A, \ a\ne b\}$ if $|A|=n$. 
\end{definition}

It is easy to see that $\delta_n$ is a $2$-Lipschitz function which vanishes precisely on $X(n-1)$. Moreover, we have~\cite[Lemma 3.1]{AkoforKovalev}: 
\begin{equation}\label{distances-to-smaller}
\frac12\delta_n(A)\le  \inf\{\dH(A, B)\colon B\in X(n-1)\} \le \delta_n(A).   
\end{equation}

The relevance of $\delta_n$ to Lipschitz clustering is indicated by the following facts.

\begin{lemma} \cite[Lemma 3.2]{AkoforKovalev} \label{lem:displacement}
If $\retr\colon X(n)\to X(n-1)$ is an $L$-Lipschitz retraction, then  for every $A\in X(n)$ we have 
\begin{equation}\label{eq:displacement}
\dH(\retr(A), A)\le (L+1)\delta_n(A).    
\end{equation}
\end{lemma}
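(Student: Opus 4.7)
The plan is to reduce to the nontrivial case $|A|=n$ by first observing that if $|A|<n$ then $A\in X(n-1)$, so $\retr(A)=A$ and both sides of \eqref{eq:displacement} are zero. Once $|A|=n$, the strategy is to compare $A$ against a carefully chosen set $B\in X(n-1)$ on which the retraction is known to act as the identity, and then invoke the Lipschitz estimate on the pair $(A,B)$.

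For the construction of $B$, I would pick two points $a_0,b_0\in A$ realizing the minimum, i.e.\ $d_X(a_0,b_0)=\delta_n(A)$, and set $B=A\setminus\{a_0\}$. Then $B\in X(n-1)$ and the Hausdorff distance $\dH(A,B)$ equals $\dist(a_0,B)$, which is at most $d_X(a_0,b_0)=\delta_n(A)$. This is the same construction that underlies the right-hand inequality of \eqref{distances-to-smaller}; using that inequality as a black box would also work, but producing $B$ explicitly avoids approximating an infimum.

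The finishing step is a three-term triangle inequality in $(X(n),\dH)$:
\begin{equation*}
\dH(\retr(A),A)\le \dH(\retr(A),\retr(B))+\dH(\retr(B),B)+\dH(B,A).
\end{equation*}
Since $B\in X(n-1)$ and $\retr$ is a retraction onto $X(n-1)$, the middle term vanishes. The $L$-Lipschitz property bounds the first term by $L\,\dH(A,B)\le L\,\delta_n(A)$, and the last term is at most $\delta_n(A)$ by the construction of $B$. Summing yields $(L+1)\delta_n(A)$, as required.

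The proof is short and the only mildly delicate point is the choice of $B$: one must ensure that the set used to ``anchor'' the Lipschitz estimate really lies in $X(n-1)$ and is within Hausdorff distance $\delta_n(A)$ of $A$. Taking $B$ to be $A$ with one of the two closest points removed handles both requirements simultaneously, so no further obstacle is expected.
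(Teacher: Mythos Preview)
Your argument is correct. The paper does not supply its own proof of this lemma; it quotes the result from \cite[Lemma~3.2]{AkoforKovalev}, and your triangle-inequality argument with $B=A\setminus\{a_0\}$ is exactly the standard proof one finds there.
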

  
\begin{lemma} \label{lem:bijection} \cite[Lemma 2.5]{Akofor}
If $A, B\in X(n)$ and $\max(\delta_n(A), \delta_n(B)) > 2\dH(A, B)$, then there exists a bijection $\phi\colon A\to B$ such that $d_X(a, \phi(a))\le \dH(A, B)$ for all $a\in A$.
\end{lemma}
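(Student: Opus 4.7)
The plan is to exploit the asymmetry built into the hypothesis: $\max(\delta_n(A), \delta_n(B)) > 2\dH(A,B)$ means one of the two sets, say $A$ (after swapping the roles of $A,B$ using the symmetry of $\dH$ if necessary), has all of its pairwise distances strictly greater than $2\dH(A,B)$. Since $\delta_n(A) > 0$ is possible only if $|A| = n$, I immediately get $|A|=n$.

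Next I would define $\phi\colon A \to B$ by letting $\phi(a)$ be any point of $B$ satisfying $d_X(a, \phi(a)) \le \dH(A,B)$; such a point exists because $\sup_{a\in A}\dist(a,B) \le \dH(A,B)$ by definition~\eqref{def-dh}. This immediately gives the desired displacement bound $d_X(a,\phi(a)) \le \dH(A,B)$ required by the conclusion.

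The heart of the argument, and the only step that is not bookkeeping, is injectivity of $\phi$. Suppose $\phi(a_1) = \phi(a_2) = b$ for some distinct $a_1, a_2 \in A$. By the triangle inequality,
\[
d_X(a_1,a_2) \le d_X(a_1,b) + d_X(b,a_2) \le 2\dH(A,B) < \delta_n(A),
\]
which contradicts the definition of $\delta_n(A)$ as the minimum distance between distinct points of $A$. So $\phi$ is injective, and then $|B| \ge |\phi(A)| = n$; combined with $B \in X(n)$, this forces $|B|=n$ and hence $\phi$ is a bijection. If instead we are in the symmetric case $\delta_n(B) > 2\dH(A,B)$, the same argument produces an injection $B \to A$, whose inverse is the required bijection $A \to B$ (with the same displacement bound, since $\dH$ is symmetric).

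I do not expect any real obstacle: the proof is essentially a one-line triangle-inequality calculation, and the only thing to be careful about is to observe up front that the hypothesis implies $|A|=|B|=n$, so that the injection $\phi$ is automatically a bijection. No hypothesis on completeness, compactness, or metric structure of $X$ beyond the triangle inequality is used.
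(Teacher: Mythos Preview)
Your argument is correct and is the standard proof of this fact. The paper does not actually supply its own proof of this lemma; it simply cites it from~\cite{Akofor}, so there is nothing to compare against beyond noting that your approach is the expected one.
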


\section{Ultrametrics and uniformly disconnected spaces}\label{sec:ultrametric}

A metric space $(X, d)$ is \emph{ultrametric} if 
\begin{equation}\label{eq:ultrametric-def}
d(x, y)\le \max(d(x, z), d(y, z)) \quad \text{for all } x, y, z\in X.
\end{equation}

A metric space $(X, d)$ is \emph{uniformly disconnected} if there exists a constant $c>0$ such that any finite sequence $x_0, \dots, x_n$ in $X$ satisfies 
\begin{equation}\label{eq:unifdisconnect-def}
\max_{1\le k\le n}d(x_k, x_{k-1}) \ge c\, d(x_0, x_n).
\end{equation}

Uniformly disconnected spaces were introduced by David and Semmes in~\cite[Chapter 15]{DavidSemmes} with a different but equivalent definition; see also Section 14.24 in~\cite{Heinonen}. A metric space $(X, d)$ is uniformly disconnected if and only if there exists an ultrametric $\rho$ on $X$ such that the ratio $d/\rho$ is bounded between two positive constants~\cite[Proposition 15.7]{DavidSemmes}. 

\begin{theorem}\label{thm:ultra} Let $(X, d)$ be an ultrametric space. For any $n > m\ge 1$ and any $L>1$ there exists a retraction $\retr\colon X(n)\to X(m)$ with $\Lip(\retr)\le L$. If $X$ is compact, one can take $L=1$. 
\end{theorem}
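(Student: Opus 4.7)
The plan is to prove a considerably more general statement: for every ultrametric space $(Y,d)$ and every nonempty closed $Z\subseteq Y$, there is an $L$-Lipschitz retraction $Y\to Z$, and one can take $L=1$ when $Y$ is compact. The theorem then reduces to the case $Y=X(n)$, $Z=X(m)$, once I verify two preliminaries. First, $\dH$ inherits the ultrametric inequality from $d$: given $A,B,C\in X(n)$ and $a\in A$, pick $b\in B$ with $d(a,b)\le\dH(A,B)$ (the infimum is attained since $B$ is finite) and $c\in C$ with $d(b,c)\le\dH(B,C)$; ultrametricity in $X$ gives $d(a,c)\le\max(\dH(A,B),\dH(B,C))$, and the symmetric bound completes the proof that $\dH(A,C)\le\max(\dH(A,B),\dH(B,C))$. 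Second, $X(m)$ is closed in $X(n)$, because cardinality is lower semicontinuous in $\dH$.

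To construct $\retr\colon Y\to Z$ I would use a ``canonical nearest-point assignment''. For each $y\in Y\setminus Z$, select a radius $r(y)\ge d(y,Z)$ such that the closed ball $C(y)=\bar B(y,r(y))$ meets $Z$; the key ultrametric fact is that any two closed balls are either disjoint or one contains the other. Using the axiom of choice, fix a representative $\phi_C\in C\cap Z$ for every such ball $C$ and set $\retr(y)=\phi_{C(y)}$, with $\retr(y)=y$ on $Z$. In the compact case take $r(y)=d(y,Z)$, which is attained because $Z$ is compact. In the general case, discretize scales: let $r(y)$ be the smallest element of $\{L^k:k\in\mathbb Z\}$ with $r(y)\ge d(y,Z)$, so that $r(y)<L\,d(y,Z)$.

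The Lipschitz constant comes from one ultrametric chain estimate:
\begin{equation*}
d(\retr(y_1),\retr(y_2))\le\max\bigl(d(y_1,\retr(y_1)),\,d(y_1,y_2),\,d(y_2,\retr(y_2))\bigr)\le\max\bigl(r(y_1),\,d(y_1,y_2),\,r(y_2)\bigr).
\end{equation*}
The argument then splits on the configuration of $C(y_1)$ and $C(y_2)$. If $C(y_1)=C(y_2)$, then $\retr(y_1)=\retr(y_2)$ by construction. If the two balls are disjoint, then $d(y_1,y_2)>\max(r(y_1),r(y_2))$ and the right side collapses to $d(y_1,y_2)$. If they are nested with $C(y_1)\subsetneq C(y_2)$, then $y_2\notin C(y_1)$ forces $d(y_1,y_2)>r(y_1)$, and picking any $z\in Z\cap C(y_1)$ yields $d(y_2,Z)\le\max(d(y_1,y_2),r(y_1))=d(y_1,y_2)$, so $r(y_2)\le L\,d(y_1,y_2)$ (respectively $r(y_2)\le d(y_1,y_2)$ in the compact case). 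The cases $y_i\in Z$ follow from the same chain after setting $r(y_i)=0$.

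The main subtlety, on which I expect the case analysis to hinge, is that the a priori bound $\max(r(y_1),d(y_1,y_2),r(y_2))$ can be strictly larger than $d(y_1,y_2)$, and collapsing it requires fully exploiting the either-disjoint-or-nested rigidity of ultrametric balls. This rigidity is exactly what connected metric spaces lack, which is consistent with the failure of LCP on the circle $S^1$ already noted after Definition~\ref{def:LCP}.
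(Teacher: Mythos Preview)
Your approach is correct and genuinely different from the paper's. Both exploit the hierarchical structure of ultrametrics, but at different levels. The paper works in $X$ itself: it builds coarsening maps $\tau_k\colon X\to X$ by sending each point to a chosen center of its radius-$2^{-k}$ ball, invokes Proposition~\ref{prop:ultra} to obtain a retraction $X(n)\to X(m)$ with Lipschitz constant at most $5$, and then pushes the constant down to any $L>1$ via the metric transform $d\mapsto d^\alpha$; the compact case is handled by Arzel\`a--Ascoli on a sequence of $(1+1/j)$-Lipschitz retractions. You instead lift the ultrametric inequality to $(X(n),\dH)$ and prove the general fact that every closed subset of an ultrametric space is an $L$-Lipschitz retract for every $L>1$ (and a $1$-Lipschitz retract when the ambient space is compact). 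This is more direct---no metric transform, no Arzel\`a--Ascoli---and as a bonus it shows that \emph{any} closed subset of $X(n)$, not only $X(m)$, admits such retractions. What the paper's route buys is Proposition~\ref{prop:ultra} itself, which has independent value and immediately yields Corollary~\ref{cor:unif-disconnect} for uniformly disconnected spaces.

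Two small points deserve a line each in a full write-up. First, in the nested case $C(y_1)\subsetneq C(y_2)$ you assert $y_2\notin C(y_1)$; this is true, but it relies on the minimality built into $r(\cdot)$: if $y_2\in C(y_1)$ then $d(y_2,Z)\le r(y_1)$, hence $r(y_2)\le r(y_1)$, whence $C(y_2)=\bar B(y_2,r(y_2))\subseteq\bar B(y_2,r(y_1))=C(y_1)$, contradicting proper containment. Second, in the non-compact construction the smallest $L^k\ge d(y,Z)$ can equal $d(y,Z)$ exactly while the infimum is not attained, so $\bar B(y,r(y))$ may miss $Z$; take instead the smallest $L^k$ strictly greater than $d(y,Z)$, which still satisfies $r(y)\le L\,d(y,Z)$ and guarantees $C(y)\cap Z\ne\emptyset$.
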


The proof of Theorem~\ref{thm:ultra} is based on a proposition that covers a wider class of metric spaces.

\begin{proposition}\label{prop:ultra} Let $(X, d)$ be a metric space. 
Suppose there exist constants $L\ge 1$ and $b\in (0, 1)$ and a family of $L$-Lipschitz maps $\tau_k\colon X\to X$, $k\in \mathbb Z$, such that for every $x, y\in X$ and every $k\in \mathbb Z$ we have: 
\begin{equation}\label{eq:tau-displacement}
    d(\tau_k(x), x)\le Lb^k;
\end{equation}
\begin{equation}\label{eq:tau-separation}
\text{either }\tau_k(x)=\tau_k(y)\text{ or }
d(\tau_k(x), \tau_k(y))\ge L^{-1}b^k.
\end{equation}
Then for any $n > m\ge 1$ there exists a retraction $\retr\colon X(n)\to X(m)$ with $\Lip(\retr)\le 2L^3 b^{-1}+1$. 
\end{proposition}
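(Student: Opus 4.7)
The plan is to define $\retr(A)$ by applying one of the given maps $\tau_k$ to $A$, with the scale $k$ adapted to how far $A$ lies from $X(m)$. Write $d_m(A) = \inf\{\dH(A, B) \colon B \in X(m)\}$, which vanishes exactly when $|A| \le m$. For $|A| \le m$ set $\retr(A) = A$; for $|A| > m$, let $k(A)$ be the largest integer with the property that $|\tau_{k'}(A)| \le m$ for every $k' \le k$, and set $\retr(A) = \tau_{k(A)}(A)$. The key preparatory fact, which I would prove first, is a pigeonhole lemma: if $b^k > L^2 d_m(A)$ then $|\tau_k(A)| \le m$. Indeed, one can choose $A^* \in X(m)$ with $\dH(A, A^*) < L^{-2} b^k$, so each $a \in A$ lies within $L^{-2} b^k$ of some $a^* \in A^*$; by the separation axiom this forces $\tau_k(a) = \tau_k(a^*)$, hence $\tau_k(A) \subseteq \tau_k(A^*)$ which has at most $m$ elements. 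The lemma guarantees $k(A)$ is well defined, and together with its maximality gives the downward-closure $|\tau_{k'}(A)| \le m$ for all $k' \le k(A)$ together with $|\tau_{k(A)+1}(A)| > m$. The displacement estimate $\dH(\retr(A), A) \le L b^{k(A)} \le L^3 d_m(A) / b$ follows from $b^{k(A)+1} \le L^2 d_m(A)$.

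The Lipschitz analysis splits by cardinality. When $|A|, |B| \le m$, $\retr$ is the identity and the bound is trivial. When WLOG $|A| > m \ge |B|$, we have $\retr(B) = B \in X(m)$ and so $d_m(A) \le \dH(A, B)$; a single triangle inequality gives $\dH(\retr(A), \retr(B)) \le \dH(\retr(A), A) + \dH(A, B) \le L^3 d_m(A)/b + \dH(A, B) \le (L^3 b^{-1} + 1)\dH(A, B)$, already within the proposition's constant. The main case is $|A|, |B| > m$. Assume WLOG $k(A) \le k(B)$; the downward-closure for $B$ then gives $|\tau_{k(A)}(B)| \le m$, so $\tau_{k(A)}(B) \in X(m)$ can serve as an intermediate:
\[
\dH(\retr(A), \retr(B)) \le \dH(\tau_{k(A)}(A), \tau_{k(A)}(B)) + \dH(\tau_{k(A)}(B), \tau_{k(B)}(B)).
\]
The first summand is bounded by $L\dH(A, B)$ since $\tau_{k(A)}$ is $L$-Lipschitz. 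For the second, the crucial input when $k(A) < k(B)$ is a size-pigeonhole at scale $k(A)+1$: maximality forces $|\tau_{k(A)+1}(A)| > m$ while downward closure (using $k(A)+1 \le k(B)$) gives $|\tau_{k(A)+1}(B)| \le m$, so two distinct points of $\tau_{k(A)+1}(A)$ must share a nearest neighbor in $\tau_{k(A)+1}(B)$. By the separation axiom those two points are at least $L^{-1} b^{k(A)+1}$ apart, forcing $b^{k(A)+1} \le 2L^2 \dH(A, B)$ and hence $b^{k(A)} \le 2L^2 \dH(A, B)/b$. Assembling, the second summand is a multiple of $\dH(A, B)/b$, and the full inequality yields the desired $(2L^3 b^{-1} + 1)\dH(A, B)$.

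The hard part is collapsing the two displacement contributions in the case $|A|, |B| > m$ into a single multiple of $\dH(A, B)$: the naive triangle inequality through $A$ and $B$ produces terms of order $Lb^{k(A)} + Lb^{k(B)}$ whose sum is bounded only by $2L b^{k(A)}$, which could be as large as $2L^3 d_m(A)/b$, and $d_m(A)$ is not controlled by $\dH(A, B)$ when both sets sit far from $X(m)$. The structural trick is to route one of the two snappings through $\tau_{k(A)}(B) \in X(m)$, so the first term in the triangle becomes a genuine $L\dH(A, B)$ rather than a displacement, and then to use the interplay of maximality of $k(A)$, downward closure for $B$, and the separation axiom at scale $k(A)+1$ to bound $b^{k(A)}$ directly in terms of $\dH(A, B)$.
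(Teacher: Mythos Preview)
Your construction is essentially the paper's: snap $A$ to $\tau_{k}(A)$ at the coarsest admissible scale. Two points deserve comment.

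First, your downward-closed definition of $k(A)$ (the largest $k$ with $|\tau_{k'}(A)|\le m$ for \emph{all} $k'\le k$) differs from the paper's $\mu(A)=\max\{k:|\tau_k(A)|\le m\}$. Your version is exactly what is needed for the step ``$k(A)+1\le k(B)\Rightarrow|\tau_{k(A)+1}(B)|\le m$'', so this is a good choice.

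Second, the detour through $\tau_{k(A)}(B)$ is unnecessary and costs you the stated constant. The ``naive'' triangle inequality through $A$ and $B$ that you dismiss in your last paragraph \emph{does} work, for precisely the reason you discovered: once $k(A)<k(B)$, the comparison at scale $k(A)+1$ bounds $b^{k(A)}$ by a multiple of $\dH(A,B)$, and this controls \emph{both} displacement terms at once. The paper writes simply
\[
\dH(\retr(A),\retr(B))\le \dH(\retr(A),A)+\dH(A,B)+\dH(B,\retr(B))\le 2Lb^{k(A)}+\dH(A,B).
\]
Moreover, the paper's pigeonhole avoids your factor of $2$: instead of comparing $\tau_{k(A)+1}(A)$ with $\tau_{k(A)+1}(B)$ in the image, it pulls back via the contrapositive $d(x,y)<L^{-2}b^k\Rightarrow\tau_k(x)=\tau_k(y)$ (your own preparatory observation). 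Since some $a_0\in A$ has $\tau_{k(A)+1}(a_0)\notin\tau_{k(A)+1}(B)$, every $b\in B$ satisfies $d(a_0,b)\ge L^{-2}b^{k(A)+1}$, hence $\dH(A,B)\ge L^{-2}b^{k(A)+1}$ with no factor $2$. Combining gives $2Lb^{k(A)}\le 2L^3b^{-1}\dH(A,B)$ and the constant $2L^3b^{-1}+1$.

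Your route, as written, yields $L+4L^3b^{-1}$ rather than $2L^3b^{-1}+1$: the first summand is $L\dH(A,B)$ instead of $\dH(A,B)$, and the image-space pigeonhole gives $b^{k(A)+1}\le 2L^2\dH(A,B)$ instead of $L^2\dH(A,B)$. The argument is sound, but to recover the proposition's exact constant you should drop the intermediate $\tau_{k(A)}(B)$ and use the pull-back pigeonhole.
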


\begin{proof} Using~\eqref{eq:tau-separation} and the $L$-Lipschitz property of $\tau_k$ we obtain 
\begin{equation}\label{eq:equal-tau2}
d(x, y)< L^{-2}b^k \implies  \tau_k(x)=\tau_k(y). 
\end{equation} 
Hence for any bounded set $A\subset X$ there exists $k\in \mathbb Z$ such that $\tau_k(A)$ consists of a single point.  On the other hand,~\eqref{eq:tau-displacement} implies that $|\tau_k(A)|=|A|$ for all sufficiently large $k$. 

Define $\retr\colon X(n)\to X(m)$ as follows: if $|A|\le m$, then $\retr(A)=A$. Otherwise let $\mu(A) = \max\{k\colon |\tau_k(A)|\le m\}$ and define $\retr(A)=\tau_{\mu(A)}(A)$. 
By definition, $\retr$ is a retraction onto $X(m)$, so it remains to check its Lipschitz property. It is convenient to let $\mu(A) = \infty$ when $A\in X(m)$.   

Given $A, B\in X(n)$, consider three cases. 

\textsc{Case 1}: $\mu(A)=\mu(B)=\infty$. This case is trivial: $\dH(\retr(A), \retr(B)) = \dH(A, B)$. 
 
\textsc{Case 2}: $\mu(A)=\mu(B)<\infty$. Let $k=\mu(A)$ and use the Lipschitz property of $\tau_k$ to obtain 
\[
\dH(\retr(A), \retr(B)) = \dH(\tau_k(A), \tau_k(B))  
\le L \dH(A, B) 
\]

\textsc{Case 3}: $\mu(A) \ne \mu(B)$. We may assume $\mu(A) < \mu(B)$. Let $k=\mu(A)+1$. Since $|\tau_k(A)| > m \ge |\tau_k(B)|$, there exists a point $a_0\in A$ such that $\tau_k(a_0)\notin \tau_k(B)$. By virtue of~\eqref{eq:equal-tau2} we have $d(a_0, b)\ge L^{-2}b^k$ for all $b\in B$. Hence
\begin{equation}\label{eq:B-far-A}
\dH(A, B)\ge L^{-2}b^{\mu(A)+1}.
\end{equation}
The assumption~\eqref{eq:tau-displacement} implies that $\dH(\retr(A), A)$ and $\dH(\retr(B), B)$ are bounded by $Lb^{\mu(A)}$. Therefore, 
\[
\begin{split}
\dH(\retr(A), \retr(B)) 
& \le \dH(\retr(A), A) + \dH(\retr(B), B)  + \dH(A, B) 
\\ & \le 2L b^{\mu(A)} + \dH(A, B) 
\\ & \le (2L^3 b^{-1}+1) \dH(A, B) 
\end{split}
\]
where the last step uses~\eqref{eq:B-far-A}.  
\end{proof}

\begin{proof}[Proof of Theorem~\ref{thm:ultra}]
For $k\in \mathbb Z$ consider the set of all open balls $B(x, 2^{-k})$ with radius $2^{-k}$ and arbitrary center $x\in X$. By the ultrametric inequality~\eqref{eq:ultrametric-def}, for any $x, y\in X$ we have either $B(x, 2^{-k})=B(x, 2^{-k})$ or $\dist(B(x, 2^{-k}), B(y, 2^{-k}))\ge 2^{-k}$. Choose a set of centers $C_k$ such that the balls $B(p, 2^{-k})$, $p\in C_k$, are disjoint and cover $X$. Define $\tau_k\colon X\to X$ so that $\tau_k(x)=p$ when $x\in B(p, 2^{-k})$ with $p\in C_k$. 

By construction, \eqref{eq:tau-displacement} and \eqref{eq:tau-separation} hold with $L=1$ and $b=1/2$. To check the Lipschitz property of $\tau_k$, suppose that $x\in B(p, 2^{-k})$ and $y\in B(q, 2^{-k})$ where $p, q\in C_k$ are distinct. By the ultrametric inequality 
\[
d(\tau_k(x), \tau_k(y)) = 
d(p, q)\le \max(2^{-k}, d(x, y)) = d(x, y)
\] which shows that $\tau_k$ is $1$-Lipschitz. 

Proposition~\ref{prop:ultra} provides a retraction $\retr\colon X(n)\to X(m)$ with $\Lip(\retr)\le 5$. This estimate can be improved with a metric transform as follows. 

Given $L>1$, choose $\alpha>1$ large enough so that $L^\alpha \ge 5$. Since the function $d^\alpha$ satisfies the ultrametric inequality, we can apply the preceding argument to the space $(X, d^\alpha)$ and obtain a retraction $\retr\colon (X(n), \Delta^\alpha)\to (X(m), \Delta^\alpha)$ with 
\[
\dH(\retr(A), \retr(B))^\alpha \le 
5 \dH(A, B)^\alpha \quad \text{for all } A, B\in X(n).
\]
In terms of the original metric $\Delta$, we have $\Lip(\retr)\le 5^{1/\alpha}\le L$ as claimed. 

It remains to consider the case of compact $X$. For each $j\in \mathbb N$ we have a retraction $\retr_j\colon X(n)\to X(m)$ with $\Lip(\retr_j)\le 1 + 1/j$. By the Arzel\`a-Ascoli theorem, a subsequence of $\{\retr_j\}$ converges to a map $\retr\colon X(n)\to X(m)$, which is easily seen to be a $1$-Lipschitz retraction. 
\end{proof}

\begin{corollary}\label{cor:unif-disconnect}
Let $(X, d)$ be an uniformly disconnected metric space. There is a constant $L\ge 1$ such that for any $n > m\ge 1$ there exists an $L$-Lipschitz retraction $\retr\colon X(n)\to X(m)$. 
\end{corollary}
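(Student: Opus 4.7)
The plan is to combine Theorem~\ref{thm:ultra} with the David--Semmes characterization of uniform disconnectedness via ultrametrics, which is quoted in the paragraph preceding the theorem: $(X,d)$ is uniformly disconnected if and only if there exists an ultrametric $\rho$ on $X$ and constants $C_1, C_2>0$ such that $C_1\rho(x,y)\le d(x,y)\le C_2\rho(x,y)$ for all $x,y\in X$. This reduces the problem to transferring a Lipschitz retraction from the Hausdorff metric induced by $\rho$ to the Hausdorff metric induced by $d$.

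First I would invoke the David--Semmes result to fix such an ultrametric $\rho$ with constants $C_1, C_2$. Next I would apply Theorem~\ref{thm:ultra} to the ultrametric space $(X,\rho)$ with, say, $L=2$, obtaining a retraction $\retr\colon X(n)\to X(m)$ whose Lipschitz constant with respect to $\dH_\rho$ (the Hausdorff metric built from $\rho$) is at most $2$. The retraction $\retr$ is a set-theoretic map; its Lipschitz behavior depends only on the chosen metric on $X(n)$.

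The third step is to observe that the bi-Lipschitz equivalence $C_1\rho\le d\le C_2\rho$ passes directly to Hausdorff distances: the inequalities
\[
C_1\,\dH_\rho(A,B)\le \dH_d(A,B)\le C_2\,\dH_\rho(A,B)
\]
hold for all $A,B\in X(n)$, as one sees from formula~\eqref{def-dh} by taking infima and suprema of the comparable point distances. Combining with the $\rho$-Lipschitz estimate for $\retr$ gives
\[
\dH_d(\retr(A),\retr(B))\le C_2\,\dH_\rho(\retr(A),\retr(B))\le 2C_2\,\dH_\rho(A,B)\le \frac{2C_2}{C_1}\,\dH_d(A,B),
\]
so $\retr$ is $L$-Lipschitz on $(X(n),\dH_d)$ with $L=2C_2/C_1$, a constant that depends only on the space $X$ and not on $n$ or $m$.

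I do not anticipate a serious obstacle here: the work is done by Theorem~\ref{thm:ultra} together with the cited characterization, and the transfer step is a routine use of bi-Lipschitz invariance of the Hausdorff metric. The only thing to be mindful of is that $L$ should be stated as a single constant valid for all pairs $n>m\ge 1$, which is automatic because Theorem~\ref{thm:ultra} gives a uniform bound and the bi-Lipschitz distortion $C_2/C_1$ is intrinsic to $X$.
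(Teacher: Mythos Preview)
Your proposal is correct and follows essentially the same route as the paper: invoke the David--Semmes ultrametric $\rho$ bi-Lipschitz equivalent to $d$, apply Theorem~\ref{thm:ultra} with Lipschitz bound $2$ on $(X,\rho)$, and transfer the estimate to $(X,d)$ via the bi-Lipschitz equivalence of the induced Hausdorff metrics. The paper normalizes the constants to a single $C$ (so the final bound is $2C^2$), but the argument is identical to yours.
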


\begin{proof} By~\cite[Proposition 15.7]{DavidSemmes} the space $X$ supports an ultrametric $\rho$ such that $C^{-1}d\le \rho\le d$ for some $C\ge 1$. By Theorem~\ref{thm:ultra} the finite subset spaces of $(X, \rho)$ admit $2$-Lipschitz retractions. In terms of the original metric $d$ these retractions are $L$-Lipschitz with $L=2C^2$. 
\end{proof}

\section{Linear sets}\label{sec:linear-sets}

The availability of total order on $\mathbb R$ allows for a more precise version of Lemma~\ref{lem:bijection}.

\begin{lemma} \label{lem:bijection-line}  
If $A, B\in \mathbb R(n)$ and $\max(\delta_n(A), \delta_n(B)) > 2\dH(A, B)$, then there exists an order-preserving (i.e., increasing) bijection $\phi\colon A\to B$ such that $|a - \phi(a)|\le \dH(A, B)$ for all $a\in A$. Moreover, 
\begin{equation}\label{eq:delete-min-lem}
\dH(A\setminus \{\min A\}, B\setminus \{\min B\}) \le \dH(A, B). 
\end{equation}
\end{lemma}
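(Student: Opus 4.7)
The plan is to take the bijection $\phi$ from Lemma~\ref{lem:bijection} and verify that under the present hypothesis it is automatically order-preserving; estimate \eqref{eq:delete-min-lem} will then follow almost immediately by restricting $\phi$.

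First I would set $h=\dH(A,B)$ and note that the hypothesis forces $|A|=|B|=n$: if $\delta_n(A)>2h$, then $|A|=n$, and the $n$ points of $A$ sit in pairwise disjoint neighborhoods of radius $h$ around points of $B$, so $|B|\ge n$. Then I would appeal to Lemma~\ref{lem:bijection} to get a bijection $\phi\colon A\to B$ with $|a-\phi(a)|\le h$ for every $a\in A$, and show this very $\phi$ must be increasing. The argument splits into two symmetric cases. If $\delta_n(A)>2h$, take any $a_1<a_2$ in $A$; then $a_2-a_1\ge \delta_n(A)>2h$, and from $\phi(a_1)\le a_1+h$ and $\phi(a_2)\ge a_2-h$ we get $\phi(a_2)-\phi(a_1)\ge a_2-a_1-2h>0$. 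If instead $\delta_n(B)>2h$, the same estimate applied to the bijection $\phi^{-1}\colon B\to A$ (which also has displacement at most $h$) shows $\phi^{-1}$ is increasing, hence so is $\phi$.

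Once $\phi$ is known to be order-preserving, it sends $\min A$ to $\min B$, so its restriction yields an order-preserving bijection $A\setminus\{\min A\}\to B\setminus\{\min B\}$ with displacement at most $h$. For each $a\in A\setminus\{\min A\}$ we then have $\dist(a,B\setminus\{\min B\})\le |a-\phi(a)|\le h$, and symmetrically for each $b\in B\setminus\{\min B\}$, which gives \eqref{eq:delete-min-lem} directly from the definition of the Hausdorff metric.

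The only delicate point is the case split on which of $\delta_n(A)$, $\delta_n(B)$ exceeds $2h$; handling it by passing to $\phi^{-1}$ in the second case avoids duplicating the calculation. Everything else is a straightforward verification using the triangle inequality on the real line.
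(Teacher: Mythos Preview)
Your proposal is correct and follows essentially the same route as the paper: apply Lemma~\ref{lem:bijection}, use the triangle inequality on $\mathbb R$ to show the resulting bijection is increasing, and then restrict it to obtain~\eqref{eq:delete-min-lem}. The only cosmetic difference is that the paper disposes of the second case with a ``without loss of generality $\delta_n(A)>2\dH(A,B)$'' rather than your explicit pass to $\phi^{-1}$.
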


\begin{proof} Without loss of generality $\delta_n(A) > 2\dH(A, B)$. Let $\phi$ be as in  Lemma~\ref{lem:bijection}. Given $a_1, a_2\in A$ with $a_1<a_2$, let $b_k=\phi(a_k)$ for $k=1, 2$. By the triangle inequality we have
\[
b_2 - b_1 \ge (a_2-a_1) - |a_1-b_1| - |a_2-b_2| 
\ge \delta_n(A) - 2\dH(A, B) > 0.
\]
proving that $\phi$ is order-preserving. Hence the restriction of $\phi$ to $A\setminus \{\min A\}$ is a bijection onto $B\setminus \{\min B\}$, which implies~\eqref{eq:delete-min-lem}. 
\end{proof}

The fact that the real line $\mathbb R$ has the LCP is known~\cite[Lemma 4.2]{Kovalev2015}. But 
Proposition~\ref{prop:line-simple} also gives a simple explicit formula for  Lipschitz retractions on the line, which we will use later.  

\begin{proposition}\label{prop:line-simple} Given a set $A\in \mathbb R(n)$, $n\ge 2$, define $s_A(x)=|A\cap (-\infty, x)|$ for $x\in \mathbb R$. The map $\retr$, defined by 
\begin{equation}\label{eq:line-simple}
\retr(A) = \{x - \delta_n(A) s_A(x) \colon x\in A\}
\end{equation}
is a $(4n-3)$-Lipschitz retraction of $\mathbb R(n)$ onto $\mathbb R(n-1)$. 
\end{proposition}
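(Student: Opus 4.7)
My plan is to verify two things in sequence: first that $\retr$ is a well-defined retraction onto $\mathbb R(n-1)$, then that it is $(4n-3)$-Lipschitz. The Lipschitz estimate will split into two cases according to whether the minimum separations are large or small compared to the Hausdorff distance between $A$ and $B$.

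For the retraction property, I enumerate $A=\{a_1<\cdots<a_k\}$ with $k=|A|\le n$, so $s_A(a_i)=i-1$ and the image consists of the points $c_i=a_i-\delta_n(A)(i-1)$. If $k<n$, then $\delta_n(A)=0$ by Definition~\ref{def:delta-n}, which simultaneously gives $\retr(A)=A$ (identity on $\mathbb R(n-1)$) and $\retr(A)\in \mathbb R(n-1)$. If $k=n$, the consecutive image gaps satisfy $c_{i+1}-c_i=(a_{i+1}-a_i)-\delta_n(A)\ge 0$, with equality attained at a minimizing index, so at least one pair collides and $|\retr(A)|\le n-1$.

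For the Lipschitz bound, I split on whether $\max(\delta_n(A),\delta_n(B))>2\dH(A,B)$ holds. In this first case, a quick pigeonhole argument rules out $|A|\ne|B|$: if $\delta_n(A)>2\dH(A,B)$, then two distinct points of $A$ cannot share a nearest neighbor in $B$, forcing $|B|\ge |A|=n$ and hence equality. Lemma~\ref{lem:bijection-line} then produces orderings with $|a_i-b_i|\le \dH(A,B)$ for all $i$. Comparing gaps yields $|\delta_n(A)-\delta_n(B)|\le 2\dH(A,B)$, and then a direct estimate
\[
|c_i(A)-c_i(B)|\le |a_i-b_i|+(i-1)|\delta_n(A)-\delta_n(B)|\le(2n-1)\dH(A,B)
\]
delivers $\dH(\retr(A),\retr(B))\le (2n-1)\dH(A,B)$.

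In the remaining case $\max(\delta_n(A),\delta_n(B))\le 2\dH(A,B)$, I will use a displacement estimate in the spirit of Lemma~\ref{lem:displacement} applied to the explicit formula. Each displacement satisfies $|c_i-a_i|=\delta_n(A)(i-1)\le(n-1)\delta_n(A)$, so $\dH(\retr(A),A)\le(n-1)\delta_n(A)\le 2(n-1)\dH(A,B)$, and symmetrically for $B$ (this also works when $|A|$ or $|B|$ is less than $n$, where $\delta_n=0$). The triangle inequality then gives $\dH(\retr(A),\retr(B))\le (4n-3)\dH(A,B)$. Since $4n-3\ge 2n-1$ for $n\ge 2$, this constant dominates. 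The only real bookkeeping is in the first case, where I have to chain together the order-preserving bijection and the gap estimate $|\delta_n(A)-\delta_n(B)|\le 2\dH(A,B)$; the second case is essentially a mechanical triangle-inequality argument once the displacement bound is in place.
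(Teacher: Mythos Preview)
Your proof is correct and follows essentially the same approach as the paper's: verify the retraction property via the collapsing of a minimal gap, then split the Lipschitz estimate into a ``well-separated'' case handled by the order-preserving bijection of Lemma~\ref{lem:bijection-line} (yielding the constant $2n-1$) and a ``small-separation'' case handled by the displacement bound $\dH(\retr(A),A)\le (n-1)\delta_n(A)$ plus the triangle inequality (yielding $4n-3$). The only cosmetic difference is your threshold $\max(\delta_n(A),\delta_n(B))>2\dH(A,B)$ versus the paper's $\delta_n(A)+\delta_n(B)>4\dH(A,B)$; yours matches the hypothesis of Lemma~\ref{lem:bijection-line} exactly, while the paper's is slightly stronger but still sufficient, and both lead to the same final constant.
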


\begin{proof} If $A\in \mathbb R(n-1)$, then $\delta_n(A)=0$ and therefore $\retr(A)=A$. If $A\in \mathbb R(n)$ has $n$ elements $a_1<\dots<a_n$, then $a_{k+1}-a_{k}=\delta_n(A)$ for some $k\in \{1, \dots, n-1\}$. This implies $a_{k+1} - \delta_n(A) s_A(a_{k+1}) = a_{k} - \delta_n(A) s_A(a_{k})$, hence $\retr(A)\in \mathbb R(n-1)$. Thus, $\retr$ is a retraction onto $\mathbb R(n-1)$. 

By the definition of $s_A$ we have  $s_A(x)\le n-1$ for all $x\in A$, and therefore
\begin{equation}\label{eq:better-displacement}
\dH(\retr(A), A)\le (n-1)\delta_n(A).
\end{equation}
By the triangle inequality, for all $A, B\in \mathbb R(n)$ we have 
\begin{equation}\label{eq:line-1}
\dH(\retr(A), \retr(B))\le \dH(A, B) + (n-1)(\delta_n(A)+\delta_n(B)). 
\end{equation}
If $\delta_n(A)+\delta_n(B)\le 4\dH(A, B)$, then~\eqref{eq:line-1} implies $\dH(\retr(A), \retr(B))\le (4n-3)\dH(A, B)$ as claimed. 

Suppose $\delta_n(A)+\delta_n(B)> 4\dH(A, B)$. By Lemma~\ref{lem:bijection-line} there exists an order-preserving bijection $\phi\colon A\to B$. Note that $s_B(\phi(x))=s_A(x)$ for all $x\in A$. 

Given a point $z\in \retr(A)$, write it as 
$z = x - \delta_n(A) s_A(x)$ for some $x\in A$, and let $w = \phi(x) - \delta_n(B) s_B(\phi(x)) \in \retr(B)$.  Then
\[
|z-w|\le |x-\phi(x)| + |\delta_n(A)-\delta_n(B)| s_A(x) 
\le \dH(A, B) + 2(n-1)\dH(A, B)  
\]
where the last step uses $\Lip(\delta_n)\le 2$. 
As $z$ runs through the points of $\retr(A)$, the corresponding point $w$ runs through all points of $\retr(B)$ because $\phi$ is a bijection. It follows that $\dH(\retr(A), \retr(B))\le (2n-1)\dH(A, B)$ in this case.  
\end{proof}

\begin{corollary}\label{cor:additive-subgroup} Every additive  subgroup $G$ of $\mathbb R$ has the LCP. Furthermore, the intersection of $G$ with any interval has the LCP as well. 
\end{corollary}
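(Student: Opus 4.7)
The plan is to apply Proposition~\ref{prop:line-simple} and verify that the explicit retraction formula~\eqref{eq:line-simple} automatically restricts to the subsets under consideration, with no loss in the Lipschitz constant.

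First I would fix $n \ge 2$ and let $\retr\colon \mathbb R(n)\to \mathbb R(n-1)$ be the $(4n-3)$-Lipschitz retraction produced by Proposition~\ref{prop:line-simple}. The key observation is that for any $A\in \mathbb R(n)$, the formula $\retr(A)=\{x-\delta_n(A)s_A(x)\colon x\in A\}$ uses only the arithmetic operations of subtraction and integer multiplication applied to elements of $A$. If $A\subset G$, then $\delta_n(A)$ is a difference of two elements of $G$, hence $\delta_n(A)\in G$; and $s_A(x)$ is a nonnegative integer, so $\delta_n(A)s_A(x)\in G$. Therefore every element $x-\delta_n(A)s_A(x)$ of $\retr(A)$ lies in $G$. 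Consequently $\retr$ restricts to a retraction $G(n)\to G(n-1)$ whose Lipschitz constant (with respect to the Hausdorff metric $\dH$ inherited from $\mathbb R$) is at most $4n-3$. This establishes the LCP for $G$.

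Next I would handle the intersection $G\cap I$ with an interval $I\subset\mathbb R$. It suffices to show that $\retr(A)\subset[\min A,\max A]$ for every $A\in \mathbb R(n)$: combined with the previous step this gives $\retr(A)\subset G\cap I$ whenever $A\subset G\cap I$, and hence $\retr$ restricts further to a Lipschitz retraction $(G\cap I)(n)\to (G\cap I)(n-1)$. To verify the containment, write the elements of $A$ in increasing order as $a_1<\dots<a_m$ with $m\le n$. Then $s_A(a_k)=k-1$ and $a_k\ge a_1+(k-1)\delta_n(A)$ by the definition of $\delta_n$, so
\[
a_1 \le a_k - \delta_n(A)s_A(a_k) \le a_k,
\]
which places every element of $\retr(A)$ inside $[a_1,a_m]\subset I$.

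The argument has no real obstacle: both claims follow from the algebraic structure of formula~\eqref{eq:line-simple}, which is why I would present it simply as an application of Proposition~\ref{prop:line-simple}. The only point to double-check is that the Lipschitz constant on the subspace is bounded by the ambient Lipschitz constant, which is immediate since the Hausdorff metric on $G(n)$ and $(G\cap I)(n)$ is the restriction of the Hausdorff metric on $\mathbb R(n)$.
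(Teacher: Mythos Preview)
Your proof is correct and follows essentially the same approach as the paper: both use the explicit retraction~\eqref{eq:line-simple}, observe that $\delta_n(A)\in G$ forces $\retr(A)\subset G$, and check that $\retr(A)\subset[\min A,\max A]$ so that intervals are preserved. You have simply supplied more detail than the paper (which just asserts $\min\retr(A)=\min A$ and $\max\retr(A)\le\max A$), but the argument is the same.
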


\begin{proof} Let $\retr $ be as in~\eqref{eq:line-simple}. 
If $A\in G(n)$, then $\delta_n(A)\in G$, hence $\retr(A)\subset G$. Moreover, $\min \retr(A)=\min A$ and $\max \retr(A) \le \max A$, which means that the subsets of any interval do not move out of the interval under $\retr$. 
\end{proof}

\begin{remark}\label{rem:symmetric} 
The retraction~\eqref{eq:line-simple} moves the points of $A$ toward $\min A$. One could replace  $s_A$ with the sign-counting function 
$
\sigma_A(x) = \frac12 \sum_{y\in A} \sgn(y-x)
$
which results in a retraction that moves the points of $A$ toward the median of $A$. However, this map does not preserve additive subgroups because $\sigma_A$ is not integer-valued. 
\end{remark}
 
So far we saw that the LCP holds both for connected subsets of $\mathbb R$ and for uniformly disconnected ones. The following result shows that it also holds for compact subsets with finitely many components.  

\begin{theorem}\label{thm:finite-union}
Suppose that $X\subset \mathbb R$ is a union of disjoint compact intervals $I_k$, $k=1, \dots, m$, some of which may degenerate into points. Then $X$ has the LCP. 
\end{theorem}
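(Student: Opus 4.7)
The plan is to construct, for each $n \ge 2$, an explicit Lipschitz retraction $\retr_n \colon X(n) \to X(n-1)$. The case $m = 1$ is immediate from Corollary~\ref{cor:additive-subgroup}, so I assume $m \ge 2$ and set $g = \min_{i \ne j} \dist(I_i, I_j) > 0$. For $A \in X(n)$, write $A_k = A \cap I_k$, $s_{A_k}(a) = |A_k \cap (-\infty, a)|$ for $a \in A_k$, and let $\delta^{(k)}(A_k)$ denote the minimum separation within $A_k$, set to $+\infty$ when $|A_k| \le 1$.

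My proposal is to apply the formula of Proposition~\ref{prop:line-simple} within each interval simultaneously, using a common shift amount. Set $\delta^*(A) = \min\{\delta^{(k)}(A_k) : |A_k| \ge 2\}$ when this set is nonempty. Define
\[
\retr_n(A) =
\begin{cases}
A & \text{if } |A| < n, \\
\bigcup_{k=1}^m \{a - \delta^*(A)\, s_{A_k}(a) : a \in A_k\} & \text{if } |A| = n \text{ and some } |A_k| \ge 2, \\
A \setminus \{\min A\} & \text{if } |A| = n \text{ and every } |A_k| \le 1.
\end{cases}
\]
Because $\delta^*(A) \le \delta^{(k)}(A_k)$ for each $k$, the shifts remain inside $I_k$ (as in Corollary~\ref{cor:additive-subgroup}), and the pair realizing $\delta^*(A)$ merges, so $\retr_n(A) \in X(n-1)$. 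Hence $\retr_n$ is a well-defined retraction.

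For the Lipschitz estimate I would split on $\dH(A, B)$. When $\dH(A, B) \ge g/2$, since $\retr_n$ takes values in $X$, the ratio is bounded by $2\diam(X)/g$. When $\dH(A, B) < g/2$, each point of $A$ is within $g/2$ of a point of $B$ in the \emph{same} interval, which rules out certain case mismatches; for instance, $A$ falling under the second clause while $B$ falls under the third with $|A|=|B|=n$ is impossible. A profile change $|A_k| \ne |B_k|$ can only arise through a near-merge or near-clone inside a single interval, forcing both $\delta^*(A),\delta^*(B) = O(\dH(A, B))$. In that regime, following the template of Proposition~\ref{prop:line-simple}, either Lemma~\ref{lem:bijection-line} produces an ordered componentwise bijection giving the bound directly, or the displacement inequality $\dH(\retr_n(A), A) \le (n-1)\delta^*(A)$ combined with the triangle inequality yields an $O(n)$-Lipschitz constant.

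The main obstacle will be verifying continuity across transitions between the three clauses, in particular between the second (``some $|A_k| \ge 2$'') and third (``every $|A_k| \le 1$'') clauses, and between the second clause and the identity clause. The identity clause serves as the glue: as a close pair in some $A_k$ collapses, $\retr_n(A)$ converges to $A$ with one element removed, which coincides with the identity applied to the limiting set of size $n-1$. Showing that the Lipschitz constant is uniform across these transitions is the most detailed, though routine, step of the argument.
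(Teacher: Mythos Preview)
Your construction is correct, but it takes a different route from the paper's. The paper first applies a bi-Lipschitz map of $\mathbb R$ that spreads the intervals $I_k$ apart so that $\dist(I_j,I_k)\ge 3nM$ with $M=\max_k\diam I_k$; on the set $V$ of $A$'s that have two points in some interval (or $|A|<n$) it then applies the \emph{global} retraction~\eqref{eq:line-simple}, which may push points out of $X$ but by at most $(n-1)M$, and composes with a $3$-Lipschitz nearest-point projection back to $X$. On the complementary set $U$ (one point per interval) it deletes $\min A$, and the enlarged gaps make $U$ and $V$ far apart in $\dH$. Your version instead keeps the original gaps, replaces the global shift $\delta_n(A)s_A(\cdot)$ by the componentwise shift $\delta^*(A)s_{A_k}(\cdot)$ so that the image stays in $X$ exactly, and pays for this with a finer case analysis.

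Your sketch of the Lipschitz estimate is sound, but one point is worth making explicit. In the clause-2 versus clause-2 case with $\dH(A,B)<g/2$, when the hypothesis of Lemma~\ref{lem:bijection-line} fails you need $\delta^*(A),\delta^*(B)\le 2\dH(A,B)$, whereas a priori you only know $\delta_n\le 2\dH$. The missing observation is that cross-interval distances are all $\ge g>2\dH(A,B)$, so $\delta_n(A)<g$ forces $\delta_n(A)=\delta^*(A)$; hence $\delta^*(A)\le 2\dH(A,B)$ as required (and similarly for $B$). With this, together with your pigeonhole argument that a profile change forces $\delta^*\le 2\dH$ on both sides, the displacement bound $(n-1)\delta^*$ and the triangle inequality give the $(4n-3)$-Lipschitz estimate exactly as in Proposition~\ref{prop:line-simple}. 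The remaining transitions (clause~2 versus clause~1, clause~3 versus anything) are indeed absorbed either by $\dH\ge g/2$ or by the same displacement bound, so your outline closes.

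The trade-off: the paper's proof is shorter and reuses Proposition~\ref{prop:line-simple} as a black box at the cost of an auxiliary bi-Lipschitz transformation and projection, while yours is more self-contained and keeps the retraction explicitly inside $X$, at the cost of the additional bookkeeping above.
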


\begin{proof} Let $M = \max_{1\le k\le m} \diam I_k$.
Fix $n\ge 2$ and let $\retr \colon \mathbb R(n)\to \mathbb R(n-1)$ be as in Proposition~\ref{prop:line-simple}. 

After applying a suitable bi-Lipschitz transformation   $F\colon \mathbb R\to \mathbb R$, we can achieve $\dist(I_k, I_j)\ge 3 n M$  whenever $j\ne k$. To be specific, $F$ could be a piecewise-linear function that does not change the diameter of any interval $I_k$ but increases the distances between them. 

Let $\widetilde{X}=\{x\in \mathbb R\colon \dist(x, X)\le nM\}$. Each gap between the components of $\widetilde{X}$ is at least a third of the corresponding gap between the components of $X$. Therefore,  the nearest-point projection $\rho\colon\widetilde{X}\to X $, which sends each point of $\widetilde{X}$ to the nearest point of $X$, is $3$-Lipschitz. 

We partition $X(n)$ as $X(n)=U\cup V$ where $U$ consists of all $A\in X(n)\setminus X(n-1)$ such that  $|A\cap I_k|\le 1$ for all $k=1, \dots, m$, and $V=X(n)\setminus U$. In other words, we have $A\in U$ if and only if $A$ intersects $n$ of the intervals $I_1, \dots, I_m$. The set $U$ is empty when $n > m$. 

Define $\retr_X\colon X(n)\to X(n-1)$ as follows.
\begin{equation}\label{eq:cases-retr}
\retr_X(A) = \begin{cases}
A\setminus \{\min A\} & \text{if } A\in U \\ 
\rho(\retr(A)) & \text{if } A\in V 
\end{cases}
\end{equation}

Since $\rho$ is only defined on $\widetilde{X}$, we must show that $\retr(A)\subset \widetilde{X}$ for $A\in V$. Such sets have $\delta_n(A)\le M$ since either $|A|<n$ or two of the points of $A$ lie in the same component of $X$. From~\eqref{eq:better-displacement} we have  $\dH(\retr(A), A)\le  (n-1)M$, hence $\retr(A)\subset \widetilde{X}$. The Lipschitz continuity of $\rho$ and $\retr$ implies that $\retr_X$ is Lipschitz on $V$. 

The set $U$ consists of $\binom{m}{n}$ connected components, based on which $n$ of the intervals $I_1, \dots, I_m$ contain a point of $A$. The distance (in the metric $\dH$) between any two  components of $U$ is at least $3nM$, because of the gaps between the intervals $I_k$. For the same reason, $\dH(A, B)\ge 3nM$ whenever $A\in U$ and $B\in V$. 

Let $A$ and $B$ be two sets in the same connected component of $U$, which means they intersect the same collection of intervals $I_k$. Since the length of each interval is bounded by $M$, we have  $\dH(A, B)\le M$. Furthermore, the gaps between the intervals force 
$\delta_n(A)\ge 3nM$. Inequality~\eqref{eq:delete-min-lem} shows that $\retr_X$ is $1$-Lipschitz on each connected component of $U$. This completes the proof.  
\end{proof}

The map $A\mapsto A\setminus \{\min A\}$ merits  further consideration. It is not continuous on $\mathbb R(n)$ when $n\ge 3$, as the example of sets $\{0, \epsilon, 1\}$ and $\{0, 1, 1+\epsilon\}$ shows: the Hausdorff distance between these sets is $\epsilon$, but it increases to $1-\epsilon$ when the minimal elements are removed.  However, it provides continuous retractions of $X(n)$ for some linear sets $X$ to which Theorem~\ref{thm:finite-union} does not apply. One such example is given below.  

\begin{proposition} \label{prop:delete-min} Let $X=\{0\}\cup\{k^{-1}\colon k\in \mathbb N\}$. Fix $n\ge 2$. For $A\in X(n)$ let $\retr(A)=A\setminus \{\min A\}$ if $|A|=n$ and $\retr(A)=A$ if $|A|<n$.  Then: 
\begin{enumerate}[(a)] 
\item $\retr$ is a H\"older continuous retraction of $X(n)$ onto $X(n-1)$, with H\"older exponent $1/2$;
\item $X$ does not have the LCP. 
\end{enumerate} 
\end{proposition}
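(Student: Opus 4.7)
For part (a), the key structural fact about $X$ is: any two positive points $1/k$ and $1/j$ with $j<k$ satisfy $1/j-1/k\ge 1/(k(k-1))\ge (1/k)^{2}$. Consequently, for any $A\in X(n)$ with $|A|=n$, the smallest element satisfies $\min A\le \sqrt{\delta_n(A)}$: if $a_1<\cdots<a_n$ are the elements of $A$ and the minimizing gap $\delta_n(A)=g$ is realized between $a_j$ and $a_{j+1}$, then either $a_j=0$ (so $\min A=0$) or $g\ge a_j^2$, giving $a_1\le a_j\le\sqrt{g}$. This estimate replaces the absence of total continuity and is the engine of the H\"older bound.

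With this in hand, I would set $h=\dH(A,B)$ and split into four cases. If $|A|,|B|\le n-1$ the retraction is the identity. If $|A|=|B|=n$ and $\max(\delta_n(A),\delta_n(B))>2h$, Lemma~\ref{lem:bijection-line} (or more precisely~\eqref{eq:delete-min-lem}) yields $\dH(\retr(A),\retr(B))\le h$ at once. If $|A|=|B|=n$ and both $\delta_n(A),\delta_n(B)\le 2h$, then $\min A,\min B\le\sqrt{2h}$; pairing each $a'\in A\setminus\{\min A\}$ with its nearest $b\in B$, we either succeed ($b\ne\min B$) or both $a'$ and $\min B$ are $O(\sqrt h)$, in which case the second-smallest element of $B$ lies within $O(\sqrt h)$ of $a'$. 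A symmetric argument handles $B\setminus\{\min B\}$, giving $\dH(\retr(A),\retr(B))\le C\sqrt h$. Finally, if $|A|=n$ and $|B|<n$, then~\eqref{distances-to-smaller} forces $\delta_n(A)\le 2h$, so $\min A\le\sqrt{2h}$, and the same comparison bounds $\dH(\retr(A),B)$ by $O(\sqrt h)$. Combining the four cases yields (a).

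For part (b) I would argue by contradiction, assuming the existence of an $L$-Lipschitz retraction $\retr\colon X(3)\to X(2)$ and examining two families of test sets. First fix $k$ and, for $N\gg k$, compare $A_k=\{0,1/k,1/2\}$ with $B_{k,N}=\{1/N,1/k,1/2\}$. One has $\dH(A_k,B_{k,N})=1/N$ and, by Lemma~\ref{lem:displacement}, $\dH(\retr(B_{k,N}),B_{k,N})\le (L+1)/N$ and $\dH(\retr(A_k),A_k)\le (L+1)/k$. Writing $\retr(A_k)=\{p_k,1/2\}$ and $\retr(B_{k,N})=\{p'_{k,N},1/2\}$ (the element $1/2$ being forced in both because removing it costs $\ge 1/2-1/k\gg (L+1)\delta_3$), the Lipschitz bound $|p_k-p'_{k,N}|\le L/N$ together with the displacement constraint on $p'_{k,N}$ forces, in the limit $N\to\infty$, $p_k\in\{0,1/k\}$.

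Next, comparing $A_k$ with $A_{k\pm 1}$ (where $\dH(A_k,A_{k+1})=1/(k(k+1))$), a short calculation shows that if $p_k=0$ then $p_{k+1}$ cannot be $1/(k+1)$, and if $p_k=1/k$ then $p_{k+1}$ cannot be $0$ — otherwise $L\ge k$. So the assignment $k\mapsto p_k$ is frozen into one of two "phases": $p_k\equiv 0$ or $p_k=1/k$ for all large $k$. The hard part, and the main obstacle, is to rule out both phases by introducing a third comparison: the pair $A_k$ versus $\widetilde A_k=\{0,1/(k+1),1/k\}$ (replacing $1/2$ with $1/(k+1)$, far from $1/2$). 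Here $\dH(A_k,\widetilde A_k)$ is of order $1/2-1/k$, and an analogous analysis of $\retr(\widetilde A_k)$ (whose two elements must cluster near $\{0,1/k\}$ by displacement) produces an assignment incompatible with either phase of $p_k$ for all sufficiently large $k$. This contradicts Lipschitz continuity and proves that no such $\retr$ exists, so $X$ lacks the LCP. The delicate step is the last comparison, where the displacement bound for $\widetilde A_k$ (whose $\delta_3$ is $1/(k(k+1))$) interacts with the Lipschitz bound across a distance of order $1$; matching the two rigorously is where the proof requires care.
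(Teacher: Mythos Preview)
Your plan for part~(a) is close to the paper's argument. The paper's key estimate is slightly sharper than yours: it shows $\dH(\retr(A),A)=a_2-a_1\le\sqrt{\delta_n(A)}$ (using $\delta_n(A)\ge\min(a_2-a_1,a_2^2)$, since every gap $a_{k+1}-a_k$ with $k\ge 2$ is at least $a_2^2$), and then the case ``both $\delta_n\le 2h$'' follows immediately from the triangle inequality
\[
\dH(\retr(A),\retr(B))\le\dH(A,B)+\sqrt{\delta_n(A)}+\sqrt{\delta_n(B)}.
\]
Your version bounds only $\min A$ rather than $a_2-a_1$, and your direct pairing step (``the second-smallest element of $B$ lies within $O(\sqrt h)$ of $a'$'') is not justified by $\min B\le\sqrt{2h}$ alone; you implicitly need $b_2=O(\sqrt h)$ as well, which requires the sharper inequality above. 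With that added, your route for (a) goes through.

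Part~(b), however, has a fatal flaw: you attempt to rule out a Lipschitz retraction $X(3)\to X(2)$, but one exists. The map $A\mapsto\{\min A,\max A\}$ is a $1$-Lipschitz retraction of $X(n)$ onto $X(2)$ for every $n$ and every $X\subset\mathbb R$ (the paper remarks on this explicitly just after the proof). Against this retraction your test sets give $\retr(A_k)=\{0,1/2\}$ and $\retr(\widetilde A_k)=\{0,1/k\}$, with $\dH(\retr(A_k),\retr(\widetilde A_k))=\dH(A_k,\widetilde A_k)=\tfrac12-\tfrac1k$, so no contradiction appears. The obstruction to the LCP lives one level higher: the paper shows there is no Lipschitz retraction $X(4)\to X(3)$. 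The mechanism is to choose $x\ll y<z$ with $z-y$ very small (e.g.\ $x=1/k^3$, $y=1/(k^2+1)$, $z=1/k^2$), set $A=\{0,y,z\}$ and $B=\{0,x,y,z\}$, and chain from $A$ to $B$ inside $X(4)$ in steps of Hausdorff size $<x^2$; this forces $y,z\in\retr(B)$, so $\retr(B)\cap[0,2x]$ contains at most one point. But Lemma~\ref{lem:displacement} yields $\dH(\retr(B),B)\le(L+1)(z-y)<x/2$, a contradiction since $B$ contains both $0$ and $x$. The point is that a three-element image cannot simultaneously cover two well-separated points near $0$ and retain both $y$ and $z$; this tension needs four source points, not three.
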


\begin{proof} 
(a) Consider a set $A\in X(n)$ of the form $A=\{a_1, \dots, a_n\}$ where $a_1<a_2<\dots < a_n$. We have $\dH(\retr(A), A)=a_2-a_1$. Also, 
\[
\delta_n(A) = \min\{a_{k+1}-a_k \colon k=1, \dots, n-1\} 
\ge \min(a_2-a_1, a_2^2)
\] 
because $|a-b|\ge ab$ for any two points $a, b\in X$. Thus
\begin{equation}\label{eq:delmin-0}
\dH(\retr(A), A)\le \sqrt{\delta_n(A)}.
\end{equation}
Recalling~\eqref{distances-to-smaller} we conclude that 
\begin{equation*}
\dH(\retr(A), \retr(B)) \le \dH(A, B) + \sqrt{2\dH(A, B)} \le 3\sqrt{\dH(A, B)} \quad  \text{if } |A|=n \text{ and } |B|<n.
\end{equation*}
 
It remains to consider the case of two sets $A, B\in X(n)\setminus X(n-1)$. If $\max(\delta_n(A), \delta_n(B))> 2\dH(A, B)$, then 
$\dH(\retr(A), \retr(B))\le \dH(A, B)$ by  Lemma~\ref{lem:bijection-line}. Assume 
$\max(\delta_n(A), \delta_n(B))\le 2\dH(A, B)$. By~\eqref{eq:delmin-0} we have 
\[\begin{split}
\dH(\retr(A), \retr(B)) & \le \dH(A, B) + \sqrt{\delta_n(A)} + \sqrt{\delta_n(B)} \\ & \le \dH(A, B) + 2\sqrt{2\dH(A, B)} 
\le 4 \sqrt{\dH(A, B)}. 
\end{split} \]
This completes the proof of~(a).

(b) If $t\in X\setminus \{0, 1\}$, then the neighbors of $t$ in $X$ are $t/(1+t)$ and $t/(1-t)$. The distances from $t$ to its neighbors are $t^2/(1+t)$ and  $t^2/(1-t)$, respectively. Hence
\begin{equation}\label{eq:nolip-0}
\dH(A, B)\ge t^2/2 \quad \text{if } t\in A\vartriangle B,     
\end{equation}
where $A, B\in X(n)$ and $A\vartriangle B = (A\setminus B)\cup (B\setminus A)$.

Suppose that $\retr\colon X(4)\to X(3)$ is an $L$-Lipschitz retraction. Choose $x, y, z\in X$ such that 
\begin{equation}\label{eq:nolip-1}
0<2x<y<z, \quad 2Lx^2 < y^2, 
\quad \text{and} \quad  2(L+1)(z-y) < x.
\end{equation}
For example, one can take $x=1/k^3$, $y=1/(k^2+1)$, and $z=1/k^2$ where $k\in \mathbb N$ is large enough that~\eqref{eq:nolip-1}  is satisfied. 

Let $A=\{0, y, z\}$ and $B=\{0, x, y, z\}$. Since the distance between consecutive elements of $X\cap [0, x]$ is less than $x^2$, there is a finite sequence of sets $A_j\in X(4)$ that begins with $A_1=A$ and ends with $A_J=B$, such that $\dH(A_j, A_{j+1})\le x^2$ for all $j=1, \dots, J-1$. It follows that $\dH(\retr(A_j), \retr(A_{j+1}))\le Lx^2<y^2/2$ for all $j$. Since $\retr(A_1)=A$ contains $y$, inequality~\eqref{eq:nolip-0} implies $y\in \retr(A_j)$ for all $j$. The same argument shows that $z\in \retr(A_j)$ for all $j$. 
Thus, we have $\retr(B)=\{u, y, z\}$ for some $u\in X$. 

The set $\retr(B)\cap [0, 2x]$ has at most one point because $z>y>2x$. Since $B$ contains both $0$ and $x$, it follows that $\dH( \retr(B), B) \ge x/2$. On the other hand, Lemma~\ref{lem:displacement} yields
\[
\dH( \retr(B), B) \le (L+1)\delta_4(B) \le 
(L+1)(z-y) < x/2 
\]
where the last step uses~\eqref{eq:nolip-1}. This contradiction completes the proof. 
\end{proof}  
 
The proof of Proposition~\ref{prop:delete-min} shows the non-existence of a Lipschitz retraction from $X(4)$ to $X(3)$. In contrast, for every subset $X\subset \mathbb R$ and any $n\ge 2$ one has $1$-Lipschitz retractions $A\mapsto \{\max(A)\}$ from $X(n)$ to $X(1)$, and $A\mapsto \{\min(A), \max(A)\}$ from $X(n)$ to $X(2)$. The exceptional nature of retractions onto $X(1)$ and $X(2)$ was also observed in~\cite{CAOZRQ} in the context of finite subsets of trees. 

\begin{corollary}\label{cor:nofactor} A Lipschitz retraction $X(n)\to X(k)$ does not necessarily factor into a  chain of retractions
\[X(n)\to X(n-1)\to \cdots \to X(k+1)\to X(k).\]
\end{corollary}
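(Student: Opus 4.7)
The plan is to read this off directly from the preceding proposition, together with the observation highlighted in the paragraph just before the corollary. Concretely, take $X=\{0\}\cup\{k^{-1}\colon k\in \mathbb N\}$, and consider the pair $n=4$, $k=2$. The existence side is immediate: the map $A\mapsto\{\min A,\max A\}$ is a $1$-Lipschitz retraction of $\mathbb R(4)$ onto $\mathbb R(2)$ that sends subsets of $X$ to subsets of $X$, so it restricts to a $1$-Lipschitz retraction $X(4)\onto X(2)$.

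Next I would argue that no Lipschitz retraction $X(4)\onto X(2)$ can factor as $X(4)\xrightarrow{\retr_1} X(3)\xrightarrow{\retr_2} X(2)$ with both $\retr_1$ and $\retr_2$ Lipschitz retractions. Indeed, in any such factorization the first arrow is a Lipschitz retraction from $X(4)$ onto $X(3)$, and Proposition~\ref{prop:delete-min}(b) rules this out for our particular $X$. Hence the $1$-Lipschitz retraction exhibited above witnesses the statement of the corollary.

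There is no real obstacle here — the work has already been done in Proposition~\ref{prop:delete-min}. The only thing to be slightly careful about is the logical form: the corollary asserts that \emph{some} Lipschitz retraction $X(n)\to X(k)$ fails to admit a factorization, so producing one concrete Lipschitz retraction together with the non-existence of its purported first stage is sufficient. The proof is therefore a one-paragraph combination of the observation $A\mapsto\{\min A,\max A\}$ with Proposition~\ref{prop:delete-min}(b), applied in the case $n=4$, $k=2$.
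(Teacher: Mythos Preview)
Your proposal is correct and is essentially identical to the paper's own proof: both take $X=\{0\}\cup\{k^{-1}\colon k\in\mathbb N\}$, use the $1$-Lipschitz retraction $A\mapsto\{\min A,\max A\}$ from $X(4)$ onto $X(2)$, and invoke Proposition~\ref{prop:delete-min}(b) to rule out any Lipschitz retraction $X(4)\to X(3)$ through which it could factor.
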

\begin{proof}
Let $X$ be the set in Proposition~\ref{prop:delete-min}. The map $X(4)\to X(2)$ sending every set $A$ to $\{\min A, \max A\}$ does not factor through a retraction onto $X(3)$, since none exist.
\end{proof}

Proposition~\ref{prop:delete-min} demonstrates that H\"older continuous clustering may be possible in some settings where Lipschitz clustering is unavailable. 
As another possible instance of this phenomenon, Akofor~\cite{Akofor} proved that for every normed space $X$ there are locally H\"older retractions $X(n)\to X(n-1)$, while the existence of Lipschitz retractions remains unknown~\cite[Question 3.4]{Kovalev2016}.

\section{Quasiconvexity of LCP spaces}\label{sec:quasiconvexity} 

The main result of this section gives a strong necessary condition for the Lipschitz Clustering Property. It exhibits a dichotomy for LCP spaces: they are either well connected by rectifiable curves, or do not admit any bi-Lipschitz maps from an interval. This result  will be used to show that the LCP is not inherited by products or disjoint unions.  

\begin{theorem}\label{thm:quasiconvex} 
Suppose that a metric space $(X, d)$ supports an $L$-Lipschitz retraction $\retr\colon X(4)\to X(3)$ and  the interval $[0, 1]$ admits an $L$-bi-Lipschitz embedding into $X$. Then there exist positive constants $r$ and $M$, depending only on $L$, such that any two points $p, q\in X$ with $d(p, q)\le r$ can be connected by a curve of length at most $Md(p, q)$.
\end{theorem}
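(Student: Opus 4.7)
Let $\gamma\colon[0,1]\to X$ be the given $L$-bi-Lipschitz embedding with image $\Gamma$, and choose $r$ of order $1/L^5$, small enough that all the estimates below hold. For $p,q\in X$ with $d(p,q)=\epsilon\le r$, I will construct a curve in $X$ from $p$ to $q$ of length at most $M\epsilon$, where $M=4L^3(L+1)$ depends only on $L$.

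The first step is to locate a sub-arc of $\gamma$ far from $\{p,q\}$. Since $\gamma$ is $L$-bi-Lipschitz, the set of parameters $s\in[0,1]$ for which $\gamma(s)$ lies within $C\epsilon$ of $\{p,q\}$ has total length at most $4LC\epsilon$, where $C=(L+1)(1+8L^3)+1=O(L^4)$ is a separation constant. For $r$ small enough, I can pick $\alpha<\beta$ in $[0,1]$ with $\beta-\alpha=4L(L+1)\epsilon$ and both $\gamma(\alpha),\gamma(\beta)$ at distance $\ge C\epsilon$ from $\{p,q\}$. Set $a:=\gamma(\alpha)$ and consider the $L$-Lipschitz path $A\colon[\alpha,\beta]\to X(4)$ defined by $A(t)=\{p,q,a,\gamma(t)\}$; the composition $\phi:=\retr\circ A\colon[\alpha,\beta]\to X(3)$ is $L^2$-Lipschitz, with $\phi(\alpha)=\{p,q,a\}$.

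Lemma~\ref{lem:displacement} at $t=\beta$ forces $\dH(\phi(\beta),A(\beta))\le(L+1)\epsilon$. The choice of $\alpha,\beta$ ensures that all pairwise distances among the four points of $A(\beta)$, except $d(p,q)=\epsilon$, exceed $2(L+1)\epsilon$; so the four points must pair up into the three points of $\phi(\beta)$ as $\{\{p,q\},\{a\},\{\gamma(\beta)\}\}$, giving $\phi(\beta)=\{m,g^\alpha,g^\beta\}$ with $m,g^\alpha,g^\beta$ respectively within $(L+1)\epsilon$ of $\{p,q\}$, $a$, and $\gamma(\beta)$. Using Lemma~\ref{lem:bijection} on intervals where $\delta_3(\phi(t))$ permits, I define continuous $L^2$-Lipschitz tracks $p_t,q_t,g_t\in\phi(t)$ starting from $p,q,a$; each track drifts at most $L^2(\beta-\alpha)=4L^3(L+1)\epsilon$ from its start, and the choice of $C$ keeps $d(g_t,p_t),d(g_t,q_t)\ge(L+1)\epsilon$ throughout, so $g_t$ never collides with $p_t$ or $q_t$. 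If $p_t,q_t$ persisted distinctly up to $t=\beta$, the drift bound would force $p_\beta=q_\beta=m$, a contradiction; hence there is a first collision time $t^*\in(\alpha,\beta]$ with $p_{t^*}=q_{t^*}$ and $|\phi(t^*)|\le 2$. Applying the displacement lemma at $t^*$, the four points of $A(t^*)$ partition into the $\le 2$ clusters of $\phi(t^*)$, each of diameter $\le 2(L+1)\epsilon$; the far-from-$\Gamma$-endpoints hypothesis forces the partition $\{\{p,q\},\{a,\gamma(t^*)\}\}$, giving $d(a,\gamma(t^*))\le 2(L+1)\epsilon$ and hence $t^*-\alpha\le 2L(L+1)\epsilon$. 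Concatenating $t\mapsto p_t$ on $[\alpha,t^*]$ with the reverse of $t\mapsto q_t$ yields a curve in $X$ from $p$ through $p_{t^*}=q_{t^*}$ back to $q$ of length at most $2L^2(t^*-\alpha)\le 4L^3(L+1)\epsilon$, proving the theorem.

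The main technical obstacle is justifying the continuous extension of $p_t,q_t$ up to the critical time $t^*$: Lemma~\ref{lem:bijection} provides bijections only on intervals of size below $\delta_3(\phi(t))/(2L^2)$, so the admissible step sizes shrink to zero as $t$ approaches $t^*$ from the left; nevertheless, the $L^2$-Lipschitz continuity of $\phi$ guarantees that each track has a well-defined $L^2$-Lipschitz extension to the closed interval $[\alpha,t^*]$, and the pigeonhole analysis at $t^*$ forces the left-limits of $p_t$ and $q_t$ to coincide.
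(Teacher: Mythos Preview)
Your argument is correct and follows the same core strategy as the paper: form the $L^2$-Lipschitz path $t\mapsto\retr(\{p,q,a,\gamma(t)\})$ in $X(3)$ with two anchor points on the embedded arc, invoke the displacement bound of Lemma~\ref{lem:displacement} to show that the images of $p$ and $q$ must merge at a parameter within $O(L^2)\,d(p,q)$ of the start, and read off a curve in $X$ from $p$ to $q$ of the right length.

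The implementations differ in two cosmetic respects. First, the paper works on a \emph{fixed} interval $I=[0,1/(24L^3)]$ with the single anchor $\Gamma(0)$ chosen at distance $\ge 1/(2L)$ from $\{p,q\}$, whereas you work on an $\epsilon$-scaled interval $[\alpha,\beta]$ whose endpoints are only $C\epsilon$-far from $\{p,q\}$; both choices make the ``near $\{p,q\}$'' and ``near $\Gamma$'' clusters stay separated throughout the motion. Second, and more substantively, the paper packages the tracking step differently: instead of following three individual $X$-valued selections $p_t,q_t,g_t$ via Lemma~\ref{lem:bijection}, it applies Lemma~\ref{lem2015} once to split the $X(3)$-valued map globally into an $X(2)$-valued piece $g$ near $\{p,q\}$ and a complementary piece $h$, proves that $g$ hits $X(1)$ at some $T\le 2(L+1)^2 d(p,q)$, and then invokes Lemma~\ref{lem:2become1} to turn the $X(2)$-path $g|_{[0,T]}$ into the desired $X$-path. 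Your hand-tracked version is essentially an inlining of Lemmas~\ref{lem:constant-card} and~\ref{lem:2become1}; the paper's route avoids the delicate discussion of extending the selections up to the first collision time, at the cost of quoting the external Lemma~\ref{lem2015}.
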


The proof of Theorem~\ref{thm:quasiconvex} is preceded by several lemmas. 

\begin{lemma} \cite[Lemma 4.3]{Kovalev2015} \label{lem2015}
Let $Z$ and $X$ be metric spaces with $D:=\diam Z < \infty$. Suppose that
$f \colon Z \to X(n)$ is an $L$-Lipschitz function such that
\begin{equation}\label{eq:lem2015}
\diam f(z_0) > 3(n-1)LD  \quad \text{for some } z_0\in Z.
\end{equation}
Then there are $L$-Lipschitz functions $g, h\colon  Z \to X(n-1)$ such that $f(z) = g(z)\cup h(z)$ for all $z \in Z$. 
Specifically, one can let
\begin{equation}\label{eq:lem2015gh}
\begin{split}
g(z) &= \{x \in f(z) \colon \dist(x, E) \le LD\}; \\
h(z) &= \{x \in f(z) \colon \dist(x, f(z_0)\setminus E) \le LD\} = f(z)\setminus g(z) 
\end{split}
\end{equation}
where $E$ can be any subset of $f(z_0)$ that satisfies $\diam E \le 3LD(|E| - 1)$ and is a maximal such subset with respect to containment.  
\end{lemma}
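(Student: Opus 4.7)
The plan is to verify the explicit formulas \eqref{eq:lem2015gh} directly; the main work is in extracting a useful separation from the maximality of $E$. The family of subsets $E\subset f(z_0)$ satisfying $\diam E\le 3LD(|E|-1)$ contains every singleton (so is nonempty) and, by the hypothesis $\diam f(z_0)>3(n-1)LD\ge 3LD(|f(z_0)|-1)$, does not contain $f(z_0)$ itself. Since $f(z_0)$ is finite, a maximal such $E$ exists with $1\le |E|<|f(z_0)|$. For any $y\in f(z_0)\setminus E$, maximality forces $\diam(E\cup\{y\})>3LD|E|$; since $\diam E\le 3LD(|E|-1)<3LD|E|$, some $x^*\in E$ must satisfy $d(x^*,y)>3LD|E|$, and the triangle inequality then yields $d(x,y)>3LD|E|-\diam E\ge 3LD$ for every $x\in E$. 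The upshot is the separation $\dist(E,f(z_0)\setminus E)>3LD$.

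Since $f$ is $L$-Lipschitz and $\diam Z=D$, we have $\dH(f(z),f(z_0))\le LD$ for every $z\in Z$. From this I would deduce three facts simultaneously. First, each $x\in f(z)$ has a partner $y\in f(z_0)$ within $LD$; depending on which side of the partition $E\sqcup(f(z_0)\setminus E)$ that partner lies on, $x$ belongs to $g(z)$ or $h(z)$, so $g(z)\cup h(z)=f(z)$. Second, no $x$ can lie in both, since otherwise the triangle inequality would produce $e\in E$ and $y\in f(z_0)\setminus E$ with $d(e,y)\le 2LD$, contradicting the previous paragraph. Third, both sets are nonempty, because every element of $E$ and of $f(z_0)\setminus E$ has a witness in $f(z)$ within $LD$. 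Thus $g(z),h(z)\in X(n-1)$ and they partition $f(z)$.

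For the Lipschitz estimates, fix $z_1,z_2\in Z$ and $x_1\in g(z_1)$; by definition of $g$ there is $e\in E$ with $d(x_1,e)\le LD$. Combining with $d(e,y)>3LD$ for every $y\in f(z_0)\setminus E$ gives $\dist(x_1,f(z_0)\setminus E)>2LD$. Choosing $x_2\in f(z_2)$ with $d(x_1,x_2)\le L\,d(z_1,z_2)\le LD$, which exists by $L$-Lipschitzness of $f$, yields $\dist(x_2,f(z_0)\setminus E)>LD$, which rules out $x_2\in h(z_2)$ and therefore places $x_2\in g(z_2)$. Hence $\sup_{x_1\in g(z_1)}\dist(x_1,g(z_2))\le L\,d(z_1,z_2)$, and by symmetry $\dH(g(z_1),g(z_2))\le L\,d(z_1,z_2)$; the estimate for $h$ is identical. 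The only delicate point is calibrating the constant in the first paragraph: the factor $3$ in the hypothesis is exactly what converts the $LD$-Hausdorff control on $f$ into a buffer of width $>2LD$ between $E$ and $f(z_0)\setminus E$, and it is this buffer that survives the $LD$-perturbation of $f$ in the Lipschitz argument.
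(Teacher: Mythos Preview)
The paper does not include a proof of this lemma; it is quoted from \cite{Kovalev2015} with the explicit formulas~\eqref{eq:lem2015gh} built into the statement, so there is no in-paper argument to compare against. That said, your proof is correct and is the natural verification of those formulas: the key step---extracting the separation $\dist(E,f(z_0)\setminus E)>3LD$ from the maximality of $E$ via the chain $\diam(E\cup\{y\})>3LD|E|>\diam E$---is exactly what the constant $3(n-1)$ in the hypothesis is calibrated to produce, and you use it cleanly to show disjointness, nonemptiness, and the $L$-Lipschitz bound for $g$ and $h$.
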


Lemma~\ref{lem2015} can be refined when the domain $Z$ is an interval and the cardinality of $f(z)$ does not depend on $z$. 

\begin{lemma}\label{lem:constant-card}
Let $X$ be a metric space. 
Let $I\subset \mathbb R$ be an interval, possibly unbounded. Any $L$-Lipschitz map $f \colon I\to X(n)\setminus X(n-1)$ can be decomposed as 
\begin{equation}\label{eq:constant-card}
 f(t)=\{f_1(t), \dots, f_n(t)\}   
\end{equation}
where each function $f_k\colon I\to X$ is $L$-Lipschitz.
\end{lemma}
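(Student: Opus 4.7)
The plan is to construct the $n$ branches locally using the nearest-point bijections from Lemma~\ref{lem:bijection}, and to piece these local matchings into global functions via the connectedness of $I$. Because $f(t) \in X(n) \setminus X(n-1)$ for every $t \in I$, each $f(t)$ has exactly $n$ elements and $\delta_n(f(t)) > 0$. Moreover, since $\delta_n$ is $2$-Lipschitz and $f$ is $L$-Lipschitz, the function $t \mapsto \delta_n(f(t))$ is continuous and strictly positive on $I$, hence bounded away from $0$ on any compact subinterval.

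First I would establish a local labeling. Fix $t_0 \in I$. For any $t \in I$ with $L|t - t_0| < \delta_n(f(t_0))/4$, the $2$-Lipschitz property of $\delta_n$ yields $\delta_n(f(t)) \ge \delta_n(f(t_0))/2 > 2\dH(f(t_0), f(t))$, so Lemma~\ref{lem:bijection} supplies a bijection $\phi_t \colon f(t_0) \to f(t)$ with $d(a, \phi_t(a)) \le \dH(f(t_0), f(t))$ for every $a \in f(t_0)$. This $\phi_t$ is unique: if two distinct points of $f(t)$ both lay within $\dH(f(t_0), f(t))$ of a common $a \in f(t_0)$, they would be within $2\dH(f(t_0), f(t)) < \delta_n(f(t))$ of each other, contradicting the separation in $f(t)$. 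Writing $f(t_0) = \{a_1, \ldots, a_n\}$, define $f_k(t) = \phi_t(a_k)$ on this neighborhood. To extend globally, fix a base point $t_* \in I$, enumerate $f(t_*) = \{a_1, \ldots, a_n\}$, and let $T \subset I$ be the set of $t$ for which continuous branches $f_1, \ldots, f_n$ exist on the subinterval joining $t_*$ to $t$ with $f_k(t_*) = a_k$ and $f(s) = \{f_1(s), \ldots, f_n(s)\}$ throughout. Local uniqueness shows $T$ is open; $T$ is also closed, because for $t_j \to t$ in $I$ with $t_j \in T$, the local bijection centered at $t$ must agree with the accumulated labeling on a punctured neighborhood by uniqueness, yielding a continuous extension to $t$. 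Connectedness of $I$ then gives $T = I$.

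For the Lipschitz estimate, given $s < t$ in $I$, let $\delta = \inf_{u \in [s, t]} \delta_n(f(u)) > 0$ and partition $[s, t]$ as $s = s_0 < s_1 < \cdots < s_m = t$ with $L(s_i - s_{i-1}) < \delta/4$. On each subinterval the local bijection property gives $d(f_k(s_{i-1}), f_k(s_i)) \le \dH(f(s_{i-1}), f(s_i)) \le L(s_i - s_{i-1})$, and summing via the triangle inequality produces $d(f_k(s), f_k(t)) \le L(t - s)$. The main obstacle is ensuring that the a priori ambiguous choice of an enumeration at the base point propagates consistently across $I$ without collision; this is precisely what the uniqueness half of the local matching argument secures, and it is why the statement requires $f$ to avoid $X(n-1)$, since $\delta_n$ must remain positive for the local bijection to be pinned down.
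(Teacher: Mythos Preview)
Your proof is correct and takes a somewhat different route from the paper. The paper invokes Lemma~\ref{lem2015} repeatedly on short subintervals to split $f$ into $X(n-1)$-valued pieces until singletons are reached, then concatenates these local decompositions across a finite partition of each compact subinterval, and finally across a countable exhaustion of $I$. You instead bypass Lemma~\ref{lem2015} entirely and rely only on the elementary bijection Lemma~\ref{lem:bijection}: the unique nearest-point matching gives local branches, and an open--closed argument on the interval promotes these to global ones. Your approach is arguably more direct, since it avoids the inductive peeling of Lemma~\ref{lem2015} and goes straight to the $n$ branches; the paper's approach has the virtue of reusing a general-purpose splitting lemma that is needed elsewhere in the section.

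One small point worth tightening: in your Lipschitz estimate you assert that on each subinterval $[s_{i-1},s_i]$ the local bijection sends $f_k(s_{i-1})$ to $f_k(s_i)$, but this identification with the already-constructed global branches is not automatic from Lemma~\ref{lem:bijection} alone. It does follow from your uniqueness argument (any two continuous selections agreeing at one point of the subinterval must agree throughout), so a one-line remark to that effect would close the loop.
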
  

\begin{proof} The minimal separation function $\delta_n(f(t))$ is continuous and positive on $I$. Therefore, for every compact subinterval $J$ there exists $\delta>0$ such that $\delta_n(f(t))\ge \delta$ for all $t\in J$. 
Let $\epsilon = \delta/(3L(n-1))$. On any subinterval $S\subset J$ with $\diam S < \epsilon$  one can apply  Lemma~\ref{lem2015} repeatedly to obtain a decomposition of the form~\eqref{eq:constant-card}, because any subset of $f(t)$ with more than one element has diameter at least $\delta$. 

Let $J=\bigcup_{i=1}^N [t_{i-1}, t_i]$ be a partition of $J$ into subintervals of length less than $\epsilon$. We have an $L$-Lipschitz decomposition for each $i$,
\[
f(t)=\{f_1^i(t), \dots, f_n^i(t)\}, \quad t\in  [t_{i-1}, t_i].    
\]
Since $\{f_1^i(t_i), \dots, f_n^i(t_i)\} = \{f_1^{i+1}(t_i), \dots, f_n^{i+1}(t_i)\}$, we can relabel the functions to achieve $f_k^i(t_i)=f_k^{i+1}(t_i)$ for all $k=1, \dots, n$ and all $i=1, \dots N-1$. This produces an $L$-Lipschitz decomposition of $f$ on $J$.

Since $I$ can be partitioned into countably many compact subintervals, the above process of concatenation produces an $L$-Lipschitz decomposition of $f$ on all of $I$.
\end{proof}

\begin{lemma}\label{lem:2become1} If a set $\{p, q\}\subset X$ is connected to some singleton $\{c\}$ by a curve of length $\ell$ in $X(2)$, then the points $p$ and $q$ are connected in $X$ by a curve of length at most $2\ell$.
\end{lemma}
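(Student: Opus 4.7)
The plan is to reparametrize the given curve by arc length, producing a $1$-Lipschitz map $\gamma\colon[0,\ell]\to X(2)$ with $\gamma(0)=\{p,q\}$ and $\gamma(\ell)=\{c\}$, and then extract from it two continuous coordinate functions $f_1,f_2$ that trace out the histories of $p$ and $q$ until they collide at a common point. If $p=q$ the claim is trivial, so I would assume $p\ne q$ and set
\[
t^{\ast}=\inf\{t\in[0,\ell]\colon |\gamma(t)|=1\},
\]
which by continuity of $\gamma$ satisfies $0<t^{\ast}\le\ell$. On the half-open interval $[0,t^{\ast})$ the map $\gamma$ takes values in $X(2)\setminus X(1)$.

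Next I would apply Lemma~\ref{lem:constant-card} to $\gamma$ restricted to $[0,t^{\ast})$ to obtain $1$-Lipschitz functions $f_1,f_2\colon [0,t^{\ast})\to X$ with $\gamma(t)=\{f_1(t),f_2(t)\}$, labeled so that $f_1(0)=p$ and $f_2(0)=q$. Write $\gamma(t^{\ast})=\{c^{\ast}\}$ for the singleton reached at time $t^{\ast}$. Since $\dH(\gamma(t),\{c^{\ast}\})\to 0$ as $t\to t^{\ast}$, both $f_1(t)$ and $f_2(t)$ must converge to $c^{\ast}$; combined with the $1$-Lipschitz property this yields continuous extensions with $f_1(t^{\ast})=f_2(t^{\ast})=c^{\ast}$.

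The proof would then conclude by concatenating $f_1$ with the reverse of $f_2$, producing a curve in $X$ from $p$ through $c^{\ast}$ to $q$. Since each $f_i$ is $1$-Lipschitz on $[0,t^{\ast}]$, each piece has length at most $t^{\ast}$, so the concatenation has length at most $2t^{\ast}\le 2\ell$, as required. The only potentially delicate step is verifying the continuous extension of $f_1,f_2$ at $t^{\ast}$, but this reduces to the elementary observation that Hausdorff convergence of two-point sets to a singleton forces both points to converge to that singleton, so no serious obstacle is anticipated.
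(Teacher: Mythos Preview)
Your proposal is correct and follows essentially the same approach as the paper's proof: arc-length parametrization, taking the first time $t^{\ast}$ (the paper's $t_0$) at which the curve hits $X(1)$, applying Lemma~\ref{lem:constant-card} on $[0,t^{\ast})$ to split $\gamma$ into two $1$-Lipschitz branches, and concatenating them after noting they share the same limit at $t^{\ast}$. The only cosmetic difference is that the paper phrases the stopping time via $\delta_2(\Gamma(t))=0$ rather than $|\gamma(t)|=1$, which is of course equivalent.
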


\begin{proof} We may assume $p\ne q$. 
Let $\Gamma\colon [0, \ell]\to X(2)$ be a curve with $\Gamma(0)=\{p, q\}$ and $\Gamma(\ell)=\{c\}$, parametrized by arclength. Since $\delta_2\colon X(2)\to [0,\infty)$ is a continuous function, the set $S = \{t\in [0, \ell] \colon \delta_2(\Gamma(t))=0\}$ is closed. Let $t_0=\inf S$. Since the restriction of $\Gamma$ to $[0, t_0)$ takes values at $X(2)\setminus X(1)$, Lemma~\ref{lem:constant-card} provides a decomposition $\Gamma(t)=\{\gamma_1(t), \gamma_2(t)\}$ where both $\gamma_1$ and $\gamma_2$ are $1$-Lipschitz on $[0, t_0)$. The curves $\gamma_1$ and $\gamma_2$ have the same limit as $t\to t_0-$ because $|\Gamma(t_0)|=1$. Thus, their concatenation is a curve from $p$ to $q$ of length at most $2t_0\le 2\ell$. 
\end{proof}

\begin{proof}[Proof of Theorem~\ref{thm:quasiconvex}] 
By assumption there exists an $L$-bi-Lipschitz embedding $\Gamma\colon [0, 1]\to X$. Let 
\begin{equation}\label{eq:choose-r}
r = \frac{1}{48(L+1)^5}.     
\end{equation}

Fix  distinct points $p,q$ with $d(p, q)\le r$ and let $E=\{p, q\}$. By the triangle inequality there exists $t_0\in \{0, 1\}$ such that $d(\Gamma(t_0), p)\ge 1/(2L)$. 
We may assume $t_0=0$ (otherwise, reverse the parametrization of $\Gamma$). 
Let $I = [0, D]$ where $D=1/(24L^3)$. Observe that  
\begin{equation}\label{eq:diamG}
 \frac{1}{24L^4} \le \diam \Gamma(I)\le \frac{1}{24L^2}
\end{equation}
and therefore 
\begin{equation}\label{eq:distG}
\dist(\Gamma(I), E)\ge \frac{1}{2L} -  \frac{1}{24L^2} 
\ge \frac{11}{24 L}.
\end{equation}

Define $f\colon I\to X(3)$ by $f(t) = \retr(\{\Gamma(t), \Gamma(0), p, q\})$. By construction, $f$ is $L^2$-Lipschitz. 

Let us check that the assumptions of Lemma~\ref{lem2015} are satisfied for the map $f$ with domain $Z=I$ of diameter $D=1/(24L^3)$, distinguished point $z_0 = 0$, Lipschitz constant $L^2$, and with the choice $E=\{p, q\}\subset f(0)$. Indeed, 
\[
\diam f(0) = \diam \{\Gamma(0), p, q\} \ge \frac{1}{2L}
= 12 L^2 D
\]
while 
\[
\diam E = d(p, q) \le r < \frac{1}{8L} = 3L^2D. 
\]
The maximality assumption of Lemma~\ref{lem2015} holds because $E$ is a maximal proper subset of $f(0)$. 

Lemma~\ref{lem2015} provides $L^2$-Lipschitz maps $g, h\colon I\to X(2)$ such that $f(t)=g(t)\cup h(t)$ for all $t\in I$, and, moreover,
\begin{equation}\label{eq:define-g}
g(t) = \{x\in f(t)\colon \dist(x, E)\le L^2 D\}.
\end{equation}
In particular, $g(0)=E=\{p, q\}$ because $f(0) = \{\Gamma(0), p, q\}$.

\textbf{Claim}: There exists $T\in [0, D]$ such that 
\begin{equation}\label{eq:DTclaim}
|g(T)|=1 \quad \text{and} \quad T\le 2(L+1)^2  d(p, q). 
\end{equation}

Assuming this claim for now, let us show how it implies the statement of Theorem~\ref{thm:quasiconvex}. Since 
$g$ is $L^2$-Lipschitz, the set $g(0)=\{p, q\}$ is connected to the singleton $g(T)$ by a curve of length at most $L^2T$ in $X(2)$. By Lemma~\ref{lem:2become1} there is a curve of length at most $2L^2T$ connecting $p$ to $q$ in $X$. By virtue of~\eqref{eq:DTclaim} the statement of Theorem~\ref{thm:quasiconvex} holds with $M = 4L^2(L+1)^2$. 

It remains to prove~\eqref{eq:DTclaim}. Fix $t\in [0, D]$ such that $g(t)$ contains more than one point. Since $g(t)$ and $h(t)$ are disjoint and their union has at most three elements, it follows that $h(t)$ is a singleton. 

Consider the set $A:=\{\Gamma(t), \Gamma(0), p, q\}$. 
By Lemma~\ref{lem:displacement}, the Hausdorff distance between $A$ and $f(t)=\retr(A)$ can be estimated as
\begin{equation}\label{eq:upper-1}
\dH(f(t), A) \le (L+1)\delta_4(A)\le (L+1)d(p, q).  
\end{equation}
But we can also estimate $\dH(f(t), A)$ from below. Indeed,~\eqref{eq:distG} and~\eqref{eq:define-g} imply that 
\begin{equation}\label{eq:lower-1}
\dist(\Gamma(t), g(t))\ge 
\dist(\Gamma(t), E) - L^2D \ge \frac{11}{24L} - \frac{1}{24L} = \frac{5}{12L}.
\end{equation}
The fact that $h(t)$ is a singleton implies that for some $s\in \{0, t\}$ 
\begin{equation}\label{eq:lower-2}
\dist(\Gamma(s), h(t)) \ge \frac{1}{2}d(\Gamma(0), \Gamma(t)) \ge \frac{t}{2L}. 
\end{equation}
Since $A$ contains both $\Gamma(0)$ and $\Gamma(t)$, it follows from~\eqref{eq:lower-1} and \eqref{eq:lower-2} that 
\[
\dH(f(t), A) \ge \min\left( \frac{5}{12L}, \frac{t}{2L}\right). 
\]
Recalling~\eqref{eq:upper-1}, we obtain 
\begin{equation}\label{eq:upper-lower}
\min\left( \frac{5}{12L}, \frac{t}{2L}\right) \le 
(L+1)d(p, q).   
\end{equation}
The right hand side of~\eqref{eq:upper-lower} does not exceed $(L+1)r = 1/(48(L+1)^4)$ which is less than $5/(12L)$ on the left hand side. Hence 
\begin{equation}\label{eq:upper-lower-2}
\frac{t}{2L} \le (L+1)d(p, q).  
\end{equation}

By~\eqref{eq:upper-lower-2}, we have $|g(T)|=1$ for any $T$ such that  $2L(L+1)d(p, q)<T \le D$. To see that this interval is nonempty, recall~\eqref{eq:choose-r} which implies
\[
2L(L+1)d(p, q)\le \frac{2L(L+1)}{48(L+1)^5}
< \frac{1}{24L^3} = D.
\]
This completes the proof of Claim~\eqref{eq:DTclaim} and of the theorem.
\end{proof}

\begin{example}\label{ex:small-gap} For every $\epsilon>0$ the set $X=[-1, 0]\cup [\epsilon, 1]$ has the LCP by Theorem~\ref{thm:finite-union}. However, the Lipschitz constant of any retraction of $X(4)$ onto $X(3)$ cannot be bounded by a constant $L$ independent of $\epsilon$. Indeed, if this was possible, then by  Theorem~\ref{thm:quasiconvex} we would have $r>0$, independent of $\epsilon$, such that any two points  $p, q\in X$ with $|p-q|\le r$ can be connected by a curve in $X$. But this is false when $\epsilon \le r$. 
\end{example}

Under the stronger hypothesis of containing bi-Lipschitz images of \emph{long} line segments, the conclusion of Theorem~\ref{thm:quasiconvex} can be strengthened to global quasiconvexity. 

\begin{corollary}\label{cor:global-qc} 
Suppose that a metric space $X$ supports an $L$-Lipschitz retraction $\retr\colon X(4)\to X(3)$ and  for every $T>0$ the interval $[0, T]$ admits an $L$-bi-Lipschitz embedding into $X$. Then $X$ is quasiconvex. 
\end{corollary}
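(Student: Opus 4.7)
The plan is to reduce to Theorem~\ref{thm:quasiconvex} by rescaling the metric. The key observation is that all relevant hypotheses (existence of an $L$-Lipschitz retraction $X(4)\to X(3)$, and existence of a unit-length $L$-bi-Lipschitz embedded segment) are invariant under replacing $d$ by $d/T$. The conclusion of Theorem~\ref{thm:quasiconvex} then automatically scales back up to give a uniform quasiconvexity constant on all of $X$, because the arbitrary choice of $T$ lets us always arrange $d(p,q)/T\le r$.

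More concretely, given any $p,q\in X$ with $p\neq q$, I would choose $T\ge d(p,q)/r$, where $r$ is the constant from Theorem~\ref{thm:quasiconvex}. By hypothesis, $[0,T]$ admits an $L$-bi-Lipschitz embedding $\gamma\colon [0,T]\to (X,d)$. Reparametrizing $\tilde\gamma(s)=\gamma(Ts)$ gives a map $[0,1]\to X$ with $L^{-1}T|s-t|\le d(\tilde\gamma(s),\tilde\gamma(t))\le LT|s-t|$, i.e., $\tilde\gamma$ is $L$-bi-Lipschitz as a map $[0,1]\to (X,d/T)$. Meanwhile, rescaling the target and source metrics of $\retr$ by the same factor does not change Lipschitz constants, so $\retr\colon X(4)\to X(3)$ remains $L$-Lipschitz when $X$ carries the metric $d/T$ (and $X(n)$ carries the corresponding Hausdorff metric $\Delta/T$). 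Thus the hypotheses of Theorem~\ref{thm:quasiconvex} are satisfied by $(X,d/T)$ with the same constant $L$, yielding the same constants $r$ and $M$.

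Since $(d/T)(p,q)=d(p,q)/T\le r$, Theorem~\ref{thm:quasiconvex} (applied to the rescaled metric) provides a curve $\Gamma$ joining $p$ and $q$ of length at most $M\,(d/T)(p,q)$ measured in $d/T$. Lengths scale linearly with the metric, so the length of $\Gamma$ in the original metric $d$ is at most $T\cdot M\,(d/T)(p,q)=M\,d(p,q)$. This gives quasiconvexity of $(X,d)$ with constant $M$. The only subtle step is verifying that both hypotheses genuinely survive the rescaling; here the requirement that $[0,T]$ embeds bi-Lipschitzly for \emph{every} $T$, not just $T=1$, is essential — it is precisely what lets us choose $T$ large enough to push any given pair $p,q$ into the ``local'' regime of Theorem~\ref{thm:quasiconvex}.
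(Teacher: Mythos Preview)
Your argument is correct and is essentially identical to the paper's own proof: both rescale the metric (you by $1/T$, the paper by a factor $\epsilon$) so that the given long $L$-bi-Lipschitz arc becomes a unit-length one, observe that the retraction's Lipschitz constant is unchanged under rescaling, apply Theorem~\ref{thm:quasiconvex} to the rescaled space, and then scale the resulting curve length back. The only cosmetic difference is the choice of scaling parameter.
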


\begin{proof} Let $X_\epsilon$ be the rescaling of metric space $(X, d)$ by the factor $\epsilon>0$; that is, the metric on $X_\epsilon$ is $\epsilon d$. The retraction $\retr\colon X(4)\to X(3)$ induces a retraction $X_\epsilon(4)\to X_\epsilon(3)$ with the same Lipschitz constant. Also, an $L$-bi-Lipschitz embedding $\Gamma\colon [0, \epsilon^{-1}]\to  X$ induces an $L$-bi-Lipschitz embedding of $[0, 1]$ into $X_\epsilon$, namely $\Gamma_\epsilon(t)=\Gamma(\epsilon^{-1} t)$. 

Applying Theorem~\ref{thm:quasiconvex} to $X_\epsilon$ we find that there exist $M$ and $r$, which depend only on $L$, such that any two points $p, q\in X_\epsilon$ with $\epsilon d(p, q) \le r$ can be joined by a curve $\gamma$ of length at most $M \epsilon d(p, q)$ in $X_\epsilon$. 

Given any two points $p, q\in X$, we can choose $\epsilon>0$ such that $\epsilon d(p, q)\le r$. The previous paragraph provides a curve connecting $p$ to $q$ in $X$, the length of which is at most $M d(p, q)$ in the metric of $X$. 
\end{proof}

\begin{example}\label{ex:parallel-lines}
Let $X=\mathbb R\times \mathbb Z$, considered as a subset of $\mathbb R^2$ with the restriction metric. Since $X$ contains lines but is not a connected space, by Corollary~\ref{cor:global-qc} it does not have the LCP. Thus, additive subgroups of $\mathbb R^2$ do not have the LCP in general, in contrast to Corollary~\ref{cor:additive-subgroup}. 
\end{example}

\begin{example}\label{ex:parabola} Consider the parabola $P=\{(x, y)\in \mathbb R^2\colon y = x^2\}$ with the restriction metric inherited from $\mathbb R^2$. The set $P$ contains $2$-bi-Lipschitz images of arbitrarily long line segments. Since $P$ is not quasiconvex, by Corollary~\ref{cor:global-qc} it does not have the LCP.
\end{example}

\section{Transformations of metric spaces}\label{sec:invariance} 

This section concerns the invariance of the Lipschitz Clustering Property under certain transformations of metric spaces. Since any bi-Lipschitz map $f\colon X\onto Y$ induces a bi-Lipschitz map of $X(n)$ onto $Y(n)$, it follows that LCP is bi-Lipschitz invariant. The following lemma extends this observation. 

\begin{lemma}\label{lem:AKretract}  \cite[Lemma 3.3]{AkoforKovalev} Suppose that $X$ and $Y$ are metric spaces and there exist Lipschitz maps $f\colon X\to Y$ and $g\colon Y\to X$ with $f\circ g=\id_Y$. If $X$ has the LCP, then so does $Y$. 
\end{lemma}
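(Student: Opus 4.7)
The plan is to transport a retraction on $X$ to one on $Y$ by sandwiching it between the set-image maps induced by $f$ and $g$. First I would record the general principle that any $L$-Lipschitz map $\varphi\colon X\to Z$ between metric spaces induces an $L$-Lipschitz map $\varphi_n\colon X(n)\to Z(n)$ given by $\varphi_n(A)=\{\varphi(a)\colon a\in A\}$. This is a routine computation from~\eqref{def-dh}: for any $a\in A$ one has $\dist(\varphi(a),\varphi(B))\le L\,\dist(a,B)\le L\,\dH(A,B)$, and symmetrically. Note that $|\varphi(A)|\le |A|$, so $\varphi_n$ does land in $Z(n)$.

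Applying this principle to the given maps, I obtain Lipschitz maps $f_n\colon X(n)\to Y(n)$ and $g_n\colon Y(n)\to X(n)$, and the hypothesis $f\circ g=\id_Y$ lifts pointwise to $f_n\circ g_n=\id_{Y(n)}$. Given that $X$ has the LCP, for each $n\ge 2$ choose a Lipschitz retraction $\retr_X^{(n)}\colon X(n)\to X(n-1)$. I would then define
\[
\retr_Y^{(n)}(B) = f_{n-1}\bigl(\retr_X^{(n)}(g_n(B))\bigr),\qquad B\in Y(n).
\]
This is a composition of three Lipschitz maps, hence Lipschitz, with constant at most $\Lip(f)\,\Lip(g)\,\Lip(\retr_X^{(n)})$.

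To verify that $\retr_Y^{(n)}$ is a retraction onto $Y(n-1)$, take any $B\in Y(n-1)$. Since $g$ cannot increase cardinality, $g_n(B)\in X(n-1)$, so $\retr_X^{(n)}$ fixes $g_n(B)$; applying $f_{n-1}$ then recovers $f_{n-1}(g_n(B))=B$. Thus $\retr_Y^{(n)}$ restricts to the identity on $Y(n-1)$, and the existence of such retractions for every $n\ge 2$ yields the LCP for $Y$.

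The argument is essentially formal. The only nontrivial point is the induced-map Lipschitz estimate, and even this is a two-line exercise in the definition of the Hausdorff metric, so I do not foresee any genuine obstacle.
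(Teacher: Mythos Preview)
Your proof is correct and is the standard argument. Note that the paper itself does not supply a proof of this lemma; it simply cites it as \cite[Lemma~3.3]{AkoforKovalev}, and your reconstruction matches the natural (and essentially unique) approach one would take there: lift $f$ and $g$ to the finite subset spaces and conjugate the retraction on $X(n)$ by them.
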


For example, Lemma~\ref{lem:AKretract} applies when $Y\subset X$ and $g\colon Y\to X$ is the inclusion map. In this case, the existence of a Lipschitz  map $f$ with $f\circ g=\id_Y$ means precisely that $Y$ is a Lipschitz retract of $X$. 

The Lipschitz retracts of $\mathbb R^d$ have a transparent characterization when $d=2$~\cite[Theorem~2.11]{HeinonenLip} and a less transparent one for $d>2$ (\cite[Theorem 3.4]{Hohti} and \cite[Theorem~2.12]{HeinonenLip}). 
All these retracts have the LCP by Lemma~\ref{lem:AKretract}. Example~\ref{ex:snowflake} will show that the converse is not true even for connected sets: a connected LCP subset of $\mathbb R^d$ need not be a Lipschitz retract of $\mathbb R^d$. 

The class of quasisymmetric maps in metric spaces~\cite[Chapters 10--12]{Heinonen} contains bi-Lipschitz class as a proper subset. 

\begin{definition}\label{def:qs-map}  A homeomorphism $f\colon X\onto Y$ is \emph{quasisymmetric} if there exists a homeomorphism $\eta \colon [0, \infty) \to [0, \infty)$ such that for any three distinct points $x, u, v$ in $X$ we have
\[
\frac{d_Y(f(x), f(u))}{d_Y(f(x), f(v))}
\le \eta\left( \frac{d_X(x, u)}{d_X(x, v)}\right). 
\]
\end{definition}

Using Theorem~\ref{thm:quasiconvex} we can show that the Lipschitz Clustering Property is not invariant under quasisymmetric maps. 
Indeed, a quasisymmetric image of $\mathbb R$ may be a curve $\Gamma$ that contains both a line segment and an unrectifiable arc such as the von Koch snowflake. This follows from Ahlfors' characterization of such images in terms of the three-point ``bounded turning'' condition~\cite[\S IV.D]{Ahlfors}. 
By Theorem~\ref{thm:quasiconvex}, $\Gamma$ does not have the LCP. 

The following lemma shows the LCP is preserved by certain transformations of the metric $d$, such as the snowflake transform $d\mapsto d^\alpha$, $0<\alpha<1$.  Note that the identity map from $(X, d)$ onto $(X, d^\alpha)$ is quasisymmetric but not bi-Lipschitz. 

\begin{lemma}\label{lem:metric-transform}
Suppose $(X, d)$ is a metric space with the LCP, and $\varphi\colon [0, \infty)\to [0, \infty)$ is a nondecreasing function such that $\varphi\circ d$ is a metric on $X$. If, in addition, there exists a constant $M$ such that 
\begin{equation}\label{eq:doubling-phi}
\varphi(2t)\le M\varphi(t)\quad \text{for all }  t\in [0, \infty), 
\end{equation}
then $(X, \varphi\circ d)$ has the LCP. 
\end{lemma}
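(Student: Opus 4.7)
The plan is to show that the retractions witnessing the LCP of $(X,d)$ remain Lipschitz retractions after the metric is replaced by $\rho:=\varphi\circ d$, with a worse but still finite Lipschitz constant. The same finite subsets are used for both $X(n)$'s, so a retraction $\retr\colon X(n)\to X(n-1)$ defined set-theoretically is shared between the two settings; only the Lipschitz bound needs to be recomputed.

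The first and main step is to establish the identity
\[
\dH_{\rho}(A,B)=\varphi\bigl(\dH_{d}(A,B)\bigr)\qquad \text{for all }A,B\in X(n).
\]
Because $A$ and $B$ are finite, every infimum in the definition~\eqref{def-dh} of the Hausdorff metric is attained as a minimum. Hence, for any $a\in A$, $\dist_{\rho}(a,B)=\min_{b\in B}\varphi(d(a,b))=\varphi(\min_{b\in B}d(a,b))=\varphi(\dist_{d}(a,B))$, the middle equality being justified by the monotonicity of $\varphi$. The same reasoning, together with $\max_{a}\varphi(\cdot)=\varphi(\max_{a}\cdot)$, yields the displayed identity. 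This is the crux of the argument; no continuity assumption on $\varphi$ is needed beyond monotonicity, precisely because we are on the \emph{finite} subset spaces.

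The second step is a routine use of the doubling hypothesis~\eqref{eq:doubling-phi}. Iterating gives $\varphi(2^{k}t)\le M^{k}\varphi(t)$ for every $k\in \mathbb N$, so if $L\ge 1$ and $k=\lceil\log_{2}L\rceil$, then $\varphi(Lt)\le M^{k}\varphi(t)$ for all $t\ge 0$.

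Finally, given an $L$-Lipschitz retraction $\retr\colon X(n)\to X(n-1)$ supplied by the LCP of $(X,d)$, combine the two observations:
\[
\dH_{\rho}(\retr(A),\retr(B))=\varphi(\dH_{d}(\retr(A),\retr(B)))\le \varphi(L\,\dH_{d}(A,B))\le M^{k}\varphi(\dH_{d}(A,B))=M^{k}\dH_{\rho}(A,B).
\]
Thus $\retr$ is $M^{k}$-Lipschitz with respect to $\Delta_{\rho}$, which proves that $(X,\varphi\circ d)$ has the LCP. The only subtle point is the opening identity; the rest is bookkeeping with the doubling constant. Since the snowflake map $\varphi(t)=t^{\alpha}$ satisfies~\eqref{eq:doubling-phi} with $M=2^{\alpha}$, the stated consequence for snowflake metrics is immediate.
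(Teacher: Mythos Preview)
Your proof is correct and follows the same route as the paper's: use that the Hausdorff metric based on $\varphi\circ d$ equals $\varphi$ applied to the Hausdorff metric based on $d$, then invoke the doubling condition to turn $\varphi(Lt)\le L'\varphi(t)$. The paper's proof is terser and leaves the identity $\dH_{\rho}=\varphi\circ\dH_{d}$ implicit, whereas you spell it out and justify it via monotonicity of $\varphi$ and finiteness of $A,B$; otherwise the arguments coincide.
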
 

\begin{proof} Consider a retraction $\retr\colon X(n) \to X(n-1)$ that is $L$-Lipschitz with respect to the Hausdorff metric $\dH$ based on $d$. The doubling property~\eqref{eq:doubling-phi} implies that there is a constant $L'$ such that $\varphi(Lt)\le L'\varphi(t)$ for all $t\in [0, \infty)$. Therefore, for any $A, B\in X(n)$ we have 
\[
\varphi(\dH(\retr(A), \retr(B))) 
\le \varphi(L \dH(A, B)) \le L'\varphi(\dH(A, B)) 
\]
which proves the claim. 
\end{proof}
 
\begin{example}\label{ex:snowflake} 
Fix $\alpha\in (1/2, 1)$. By \cite[Proposition 4.4]{Assouad} there exists a map $f\colon [0, 1]\to \mathbb R^2$ such that 
\[
C^{-1} |x-y|^\alpha\le |f(x)-f(y)|\le C|x-y|^\alpha ,\quad x, y\in \mathbb [0, 1]
\]
for some constant $C$.  Lemma~\ref{lem:metric-transform} shows that the snowflake-type curve $\Gamma=f([0, 1])$ has the LCP. On the other hand, $\Gamma$ contains no rectifiable curves and therefore is not a  quasiconvex set. Consequently, it is not a Lipschitz retract of $\mathbb R^2$.
\end{example} 
 
Since Lipschitz maps generally behave well with respect to Cartesian products, one may expect the Lipschitz Clustering Property to be inherited by such products. However, \emph{Rickman's rug}, one of standard examples of a fractal surface (\cite{DiMarco}, \cite{Freeman13}, or \cite[p. 65]{MackayTyson}), provides a counterexample. By definition, a Rickman's rug is the Cartesian product of a line segment $I$ with the ``snowflake'' $I_\alpha$ which is the set $I$ equipped with the metric $|x-y|^\alpha$, $0<\alpha<1$. Both factors $I$ and $I_\alpha$  have the LCP by Theorem~\ref{thm:finite-union} and Lemma~\ref{lem:metric-transform}. However, Theorem~\ref{thm:quasiconvex} will show that the product $I\times  I_\alpha$ does not have the LCP, since it contains some line segments without being locally connected by rectifiable curves. 

The argument from the previous paragraph applies to the disjoint union $X=I\sqcup I_\alpha$ as well. It follows that the disjoint union of two  compact LCP spaces need not have the LCP.

The Lipschitz Clustering Property is preserved by quasihomogeneous maps, which form an intermediate class between bi-Lipschitz and quasisymmetric maps. 

\begin{definition}\label{def:qh-map} \cite{Freeman11,GhamsariHerron,HerronMayer}   A homeomorphism $f\colon X\onto Y$ is \emph{quasihomogeneous} if there exists a homeomorphism $\eta \colon [0, \infty) \to [0, \infty)$ such that for any four distinct points $x_1, \dots, x_4$ in $X$ we have
\begin{equation}\label{eq:qh-map}
\frac{d_Y(y_1, y_2)}{d_Y(y_3, y_4)}
\le \eta\left( \frac{d_X(x_1, x_2)}{d_X(x_3, x_4)}\right) 
\end{equation}
where $y_k=f(x_k)$, $k=1, \dots, 4$. 
\end{definition}

\begin{proposition}\label{prop:qhmaps}
If $f\colon X\onto Y$ is a quasihomogeneous map and $X$ has the LCP, then $Y$ has the LCP as well. \end{proposition}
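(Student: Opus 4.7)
My plan is to transport a Lipschitz retraction directly from $X$ to $Y$ by conjugation with $f$. Given any Lipschitz retraction $\retr_X\colon X(n)\to X(n-1)$ provided by the LCP of $X$, set
\[
\retr_Y(B) = f\bigl(\retr_X(f^{-1}(B))\bigr),\qquad B\in Y(n).
\]
Since $f$ is a bijection preserving cardinality, $\retr_Y$ is a well-defined retraction of $Y(n)$ onto $Y(n-1)$. The entire task is to bound its Lipschitz constant with respect to the Hausdorff metric on $Y(n)$.

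The crux is that, although $f$ need not be bi-Lipschitz, the quasihomogeneity condition forces it to distort all distances at a given scale in a uniform way. Assuming $|X|\ge 4$ (the finite case is trivial), fix a reference pair $p, q\in X$ and define
\[
\phi(s) = \sup\{d_Y(f(a), f(a'))\colon a\ne a'\text{ in }X,\ d_X(a, a')=s\}.
\]
Applying~\eqref{eq:qh-map} with $(p,q)$ as the reference pair shows $\phi(s)$ is finite. Swapping the roles of the two pairs in~\eqref{eq:qh-map} shows that any two pairs at distance $s$ produce $d_Y$-values comparable within the factor $\eta(1)$, so
\[
\eta(1)^{-1}\phi(s)\le d_Y(f(a), f(a'))\le \phi(s)\quad\text{whenever }d_X(a, a') = s,
\]
modulo routing through an auxiliary disjoint pair in the degenerate case where $\{a, a'\}$ intersects $\{p, q\}$. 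A direct application of~\eqref{eq:qh-map} to pairs of distances $Ls$ and $s$ yields the growth estimate $\phi(Ls)\le \eta(L)\phi(s)$. Finally, replacing $\phi$ by $s\mapsto \sup_{r\le s}\phi(r)$ makes it nondecreasing while preserving both the two-sided comparability and the growth bound, up to additional factors of $\eta(1)$.

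The monotonicity of $\phi$ lifts the pointwise estimate to the Hausdorff metric on finite subset spaces: since the $\sup$ and $\inf$ in~\eqref{def-dh} are taken over finite sets, one gets
\[
\eta(1)^{-2}\phi(\dH_X(A, A'))\le \dH_Y(f(A), f(A'))\le \phi(\dH_X(A, A')),\quad A, A'\in X(n).
\]
Combining this with the Lipschitz bound $\dH_X(\retr_X(A_1), \retr_X(A_2))\le L\,\dH_X(A_1, A_2)$ and the growth $\phi(Lt)\le \eta(L)\phi(t)$, a short chain of inequalities gives
\[
\dH_Y(\retr_Y(B_1), \retr_Y(B_2)) \le \phi\bigl(L\,\dH_X(f^{-1}B_1, f^{-1}B_2)\bigr) \le \eta(L)\eta(1)^2\, \dH_Y(B_1, B_2),
\]
so $\retr_Y$ is Lipschitz with a constant depending only on $\Lip(\retr_X)$ and $\eta$, establishing the LCP of $Y$.

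The main obstacle is the construction of the scaling function $\phi$ with all the properties needed: finiteness, two-sided comparability with the pointwise distortion of $f$, monotonicity after adjustment, and the doubling-type growth $\phi(Lt)\le \eta(L)\phi(t)$ that absorbs $\Lip(\retr_X)$. The degenerate four-point configurations that the quasihomogeneity inequality~\eqref{eq:qh-map} does not directly cover require minor bookkeeping through auxiliary pairs, but no substantively new idea beyond the triangle inequality.
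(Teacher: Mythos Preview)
Your overall strategy---conjugating a Lipschitz retraction on $X(n)$ by the bijection $f$---matches the paper's. The difference is in how you establish the Lipschitz estimate for $\retr_Y$. You construct an auxiliary scaling function $\phi$ capturing the distortion of $f$ at each scale, establish the doubling-type bound $\phi(Lt)\le\eta(L)\phi(t)$, and then sandwich $\dH_Y(f(A),f(A'))$ between multiples of $\phi(\dH_X(A,A'))$. The paper skips the construction of $\phi$ entirely: it proves directly that the induced map $A\mapsto f(A)$ on finite subset spaces is $\eta$-quasihomogeneous with the \emph{same} control function $\eta$, and then applies that four-set inequality once with $A_1=\retr(A)$, $A_2=\retr(A')$, $A_3=A$, $A_4=A'$ and $t=\Lip(\retr)$, obtaining $\Lip(\retr_Y)\le\eta(\Lip(\retr_X))$ in one stroke. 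Your detour through $\phi$ is a valid and natural alternative (it amounts to the fact that a quasihomogeneous map becomes bi-Lipschitz after a suitable metric transform on the domain), but it is longer, requires the bookkeeping you flag about degenerate four-point configurations and about values of $s$ not realized as distances in $X$, and yields the slightly worse constant $\eta(L)\eta(1)^2$ in place of the paper's $\eta(L)$.
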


\begin{proof} Property~\eqref{eq:qh-map} can be equivalently stated as follows: if $t > 0$ and $d_X(x_1, x_2)\le d_X(x_3, x_4)$, then $d_Y(y_1, y_2) \le \eta(t) d_Y(y_3, y_4)$. 

Our first goal is to prove that if 
$A_1, \dots, A_4\in X(n)$ satisfy $\dH_X(A_1, A_2) \le t \dH_X(A_3, A_4)$, then the sets $B_k=f(A_k)$ satisfy 
\begin{equation}\label{eq:induced-qh}
\dH_Y(B_1, B_2)\le \eta(t) \dH_Y(B_3, B_4).
\end{equation}
That is,  the map $X(n)\to Y(n)$ defined by $A\mapsto f(A)$ is $\eta$-quasihomogeneous. A similar observation was made in~\cite[Theorem 3.4]{KovalevTyson} but it was not quantifies as in~\eqref{eq:induced-qh}. 

Because $B_1$ and $B_2$ are interchangeable, to
prove~\eqref{eq:induced-qh} it suffices to show that $\forall y_1\in B_1 \ \exists y_2\in B_2$ such that 
\begin{equation}\label{eq:qh01}
d_Y(y_1, y_2)\le \eta(t) \dH_Y(B_3, B_4).    
\end{equation}
Given $y_1\in B_1$, let $x_1=f^{-1}(y_1)$ and choose $x_2\in A_2$ so that $d_X(x_1, x_2)\le \dH_X(A_1, A_2)$. We claim that the point $y_2:=f(x_2)$ satisfies~\eqref{eq:qh01}. 

After exchanging the roles of $A_3$ and $A_4$ if necessary, we can find $x_3\in A_3$ such that $d_X(x_3, x_4)\ge \dH_X(A_3, A_4)$ for all $x_4\in A_4$. Thus, with the above choice of $x_1, x_2, x_3$ we have 
\[
d_X(x_1, x_2)\le t d_X(x_3, x_4), \quad \forall x_4\in A_4.
\]
The quasihomogeneity of $f$ implies 
\[
d_Y(y_1, y_2)\le  \eta(t) d_Y(y_3, y_4), \quad \forall y_4\in B_4
\]
where $y_3=f(x_3)$. Taking the minimum over $y_4\in B_4$ we obtain~\eqref{eq:qh01}. This completes the proof of~\eqref{eq:induced-qh}. 

By assumption, there exists a Lipschitz retraction $\retr\colon X(n)\to X(n-1)$. For any sets $A, A'\in X(n)$ we have  
$\dH_X(\retr(A), \retr(A')) \le L \dH_X(A, A')$ where $L=\Lip(\retr)$. Inequality~\eqref{eq:induced-qh} yields  
\[
\dH_Y(f(\retr(A)), f(\retr(A')))\le \eta(L) \dH_Y(f(A), f(A')).
\] 
Therefore, the map $\widetilde{\retr}\colon Y(n)\to Y(n-1)$, defined by $\widetilde{\retr}(B) = f(\retr(f^{-1}(A)))$, is $\eta(L)$-Lipschitz. 
It is easy to see that $\widetilde{\retr}$ is a retraction onto $Y(n-1)$.
\end{proof}

For example, Proposition~\ref{prop:qhmaps} shows that every quasihomogeneous image of Euclidean space $\mathbb R^d$ has the LCP. Freeman~\cite{Freeman11} proved that an unbounded Jordan curve $\Gamma$ in the plane is a quasihomogeneous image of $\mathbb R$ if and only if $\Gamma$ is \emph{bi-Lipschitz homogeneous}, meaning that there exists $L$ such that for any two points $x,y\in \Gamma$ there exists an $L$-bi-Lipschitz self homeomorphism of $\Gamma$ sending $x$ to $y$.  Quasihomogeneous images of $\mathbb R^2$ have been described in~\cite{Freeman13}.

\section{Questions and remarks}\label{sec:questions} 

\begin{question}\label{q:unif-Lip} Is the converse of Corollary~\ref{cor:unif-disconnect} true? That is, does the existence of a uniformly Lipschitz family of retractions $X(n)\to X(m)$ for all $n>m\ge 1$ imply that $X$ is uniformly disconnected? 
\end{question}

Some evidence in favor of affirmative answer is provided by Corollary~5.2 in~\cite{AkoforKovalev} which states that if $X$ contains a bi-Lipschitz image of a line segment as its Lipschitz retract, then for any family of retractions $\retr_n\colon X(n)\to X(n-1)$ the ratio $n^{-1}\Lip \retr_n$ is bounded below by a positive constant. 

\begin{question} 
Does the existence of a Lipschitz retraction $X(n+1)\to X(n)$ imply the existence of Lipschitz  retraction $X(n)\to X(n-1)$? Here $X$ is a general metric space and $n\ge 2$. 
\end{question} 

\begin{question} If a homeomorphic image of $\mathbb R^d$ has the LCP, is it a space of bounded turning~\cite[p.~120]{Heinonen}? Note that quasihomogeneous images of $\mathbb R^d$ have the LCP by Proposition~\ref{prop:qhmaps} and they are spaces of bounded turning. 
\end{question}

\begin{question} Is there a geometric description of the LCP subsets of $\mathbb R$? Some natural examples that are not covered by  \S\ref{sec:linear-sets} are: Cantor-type sets that are not uniformly disconnected (e.g., those of positive measure), countable sets formed by convergent sequences (such as the set in Proposition~\ref{prop:delete-min}), and the set of irrational numbers $\mathbb R\setminus \mathbb Q$. Example~\ref{ex:small-gap} suggests that Cantor-type sets of positive measure do not have the LCP. 
\end{question}

\bibliography{references.bib} 

\begin{thebibliography}{10}

\bibitem{Ahlfors}
Lars~V. Ahlfors.
\newblock {\em Lectures on quasiconformal mappings}, volume~38 of {\em
  University Lecture Series}.
\newblock American Mathematical Society, Providence, RI, second edition, 2006.
\newblock With supplemental chapters by C. J. Earle, I. Kra, M. Shishikura and
  J. H. Hubbard.

\bibitem{Akofor}
Earnest Akofor.
\newblock On {L}ipschitz retraction of finite subsets of normed spaces.
\newblock {\em Israel J. Math.}, 234(2):777--808, 2019.

\bibitem{AkoforKovalev}
Earnest Akofor and Leonid~V. Kovalev.
\newblock Growth rate of {L}ipschitz constants for retractions between finite
  subset spaces.
\newblock {\em Studia Math.}, 260(3):317--326, 2021.

\bibitem{AMS}
Robert~N. Andersen, M.~M. Marjanovi\'{c}, and Richard~M. Schori.
\newblock Symmetric products and higher-dimensional dunce hats.
\newblock {\em Topology Proc.}, 18:7--17, 1993.

\bibitem{Assouad}
Patrice Assouad.
\newblock Plongements lipschitziens dans {${\bf R}^{n}$}.
\newblock {\em Bull. Soc. Math. France}, 111(4):429--448, 1983.

\bibitem{BacakKovalev}
Miroslav Ba\v{c}\'{a}k and Leonid~V. Kovalev.
\newblock Lipschitz retractions in {H}adamard spaces via gradient flow
  semigroups.
\newblock {\em Canad. Math. Bull.}, 59(4):673--681, 2016.

\bibitem{BorovikovaIbragimov}
Marina Borovikova and Zair Ibragimov.
\newblock The third symmetric product of {$\mathbb{R}$}.
\newblock {\em Comput. Methods Funct. Theory}, 9(1):255--268, 2009.

\bibitem{BorsukUlam}
Karol Borsuk and Stanislaw Ulam.
\newblock On symmetric products of topological spaces.
\newblock {\em Bull. Amer. Math. Soc.}, 37(12):875--882, 1931.

\bibitem{CAOZRQ}
Enrique Casta{\~n}eda-Alvarado, Fernando Orozco-Zitli, and M\'{o}nica~A.
  Reyes-Quiroz.
\newblock Lipschitz retractions on symmetric products of trees.
\newblock {\em Indian J. Pure Appl. Math.}, 2021.

\bibitem{DavidSemmes}
Guy David and Stephen Semmes.
\newblock {\em Fractured fractals and broken dreams}, volume~7 of {\em Oxford
  Lecture Series in Mathematics and its Applications}.
\newblock The Clarendon Press, Oxford University Press, New York, 1997.

\bibitem{DiMarco}
Claudio~A. DiMarco.
\newblock Fractal curves and rugs of prescribed conformal dimension.
\newblock {\em Topology Appl.}, 248:117--127, 2018.

\bibitem{FOUCHAL2013219}
S.~Fouchal, M.~Ahat, S.~{Ben Amor}, I.~Lavallée, and M.~Bui.
\newblock Competitive clustering algorithms based on ultrametric properties.
\newblock {\em Journal of Computational Science}, 4(4):219--231, 2013.

\bibitem{Freeman11}
David~M. Freeman.
\newblock Unbounded bilipschitz homogeneous {J}ordan curves.
\newblock {\em Ann. Acad. Sci. Fenn. Math.}, 36(1):81--99, 2011.

\bibitem{Freeman13}
David~M. Freeman.
\newblock Transitive bi-{L}ipschitz group actions and bi-{L}ipschitz
  parameterizations.
\newblock {\em Indiana Univ. Math. J.}, 62(1):311--331, 2013.

\bibitem{GhamsariHerron}
Manouchehr Ghamsari and David~A. Herron.
\newblock Bi-{L}ipschitz homogeneous {J}ordan curves.
\newblock {\em Trans. Amer. Math. Soc.}, 351(8):3197--3216, 1999.

\bibitem{Heinonen}
Juha Heinonen.
\newblock {\em Lectures on analysis on metric spaces}.
\newblock Universitext. Springer-Verlag, New York, 2001.

\bibitem{HeinonenLip}
Juha Heinonen.
\newblock {\em Lectures on {L}ipschitz analysis}, volume 100 of {\em Report.
  University of Jyv\"{a}skyl\"{a} Department of Mathematics and Statistics}.
\newblock University of Jyv\"{a}skyl\"{a}, Jyv\"{a}skyl\"{a}, 2005.

\bibitem{HerronMayer}
David~A. Herron and Volker Mayer.
\newblock Bi-{L}ipschitz group actions and homogeneous {J}ordan curves.
\newblock {\em Illinois J. Math.}, 43(4):770--792, 1999.

\bibitem{Hohti}
Aarno Hohti.
\newblock On absolute {L}ipschitz neighbourhood retracts, mixers, and
  quasiconvexity.
\newblock {\em Topology Proc.}, 18:89--106, 1993.

\bibitem{Kovalev2015}
Leonid~V. Kovalev.
\newblock Symmetric products of the line: embeddings and retractions.
\newblock {\em Proc. Amer. Math. Soc.}, 143(2):801--809, 2015.

\bibitem{Kovalev2016}
Leonid~V. Kovalev.
\newblock Lipschitz retraction of finite subsets of {H}ilbert spaces.
\newblock {\em Bull. Aust. Math. Soc.}, 93(1):146--151, 2016.

\bibitem{KovalevTyson}
Leonid~V. Kovalev and Jeremy~T. Tyson.
\newblock Hyperbolic and quasisymmetric structure of hyperspaces.
\newblock In {\em In the tradition of {A}hlfors-{B}ers. {IV}}, volume 432 of
  {\em Contemp. Math.}, pages 151--166. Amer. Math. Soc., Providence, RI, 2007.

\bibitem{LangPlaut}
Urs Lang and Conrad Plaut.
\newblock Bilipschitz embeddings of metric spaces into space forms.
\newblock {\em Geom. Dedicata}, 87(1-3):285--307, 2001.

\bibitem{MackayTyson}
John~M. Mackay and Jeremy~T. Tyson.
\newblock {\em Conformal dimension}, volume~54 of {\em University Lecture
  Series}.
\newblock American Mathematical Society, Providence, RI, 2010.

\bibitem{MurtaghDownsContreras}
Fionn Murtagh, Geoff Downs, and Pedro Contreras.
\newblock Hierarchical clustering of massive, high dimensional data sets by
  exploiting ultrametric embedding.
\newblock {\em SIAM J. Sci. Comput.}, 30(2):707--730, 2008.

\bibitem{Tuffley}
Christopher Tuffley.
\newblock Finite subset spaces of {$S^1$}.
\newblock {\em Algebr. Geom. Topol.}, 2:1119--1145, 2002.

\end{thebibliography}
\bibliographystyle{plain} 

\end{document}